\newcommand{\field}[1]{\mathbf #1}
\newcommand{\mf}[1]{\mathfrak #1}
\newcommand{\mc}[1]{\mathcal #1}
\newcommand{\ms}[1]{\mathscr #1}
\newcommand{\widebar}[1]{\overline{#1}}
\DeclareMathOperator{\Jac}{Jac}
\DeclareMathOperator{\Alb}{Alb}
\DeclareMathOperator{\Sym}{Sym}
\DeclareMathOperator{\V}{\mathbf{V}}
\newcommand{\R}{\field R}
\newcommand{\C}{\field C}
\newcommand{\F}{\field F}
\newcommand{\Z}{\field Z}
\newcommand{\Q}{\field Q}
\DeclareMathOperator{\Spec}{Spec}
\newcommand{\A}{\field A}
\DeclareMathOperator{\Pic}{Pic}
\DeclareMathOperator{\GL}{GL}
\DeclareMathOperator{\Ext}{Ext}
\DeclareMathOperator{\NS}{NS}
\newcommand{\G}{\field G} 
\renewcommand{\H}{\operatorname{H}}
\newcommand{\Gal}{\operatorname{Gal}}
\DeclareMathOperator{\ind}{ind}
\DeclareMathOperator*{\tensor}{\otimes}
\DeclareMathOperator{\Br}{\operatorname{Br}}
\newcommand{\geom}{\text{\rm geom}}
\renewcommand{\mathbb}{\mathbf}
\newcommand{\Qlbar}{\overline\Q_\ell}
\newcommand{\Fq}{\F_q}
\numberwithin{equation}{subsection}
\newtheorem{thm}{Theorem}[subsection]
\newtheorem{prop}[thm]{Proposition}
\newtheorem{cor}[thm]{Corollary}
\newtheorem{lem}[thm]{Lemma}
\theoremstyle{definition}
\newtheorem{defn}[thm]{Definition}
\newtheorem{assumption}[thm]{Assumption}
\newtheorem{observation}[thm]{Observation}
\theoremstyle{remark}
\newtheorem{ques}[thm]{Question}
\newtheorem{question}[thm]{Question}
\numberwithin{equation}{subsection}
\DeclareMathOperator{\Sp}{Sp}
\author{Wei Ho and Max Lieblich}
\title{Splitting Brauer classes using the universal Albanese}
\begin{document}
\maketitle
\begin{abstract}
  We prove that every Brauer class over a field splits over a torsor under an
  abelian variety. If the index of the class is not congruent to 2
  modulo 4, we show 
  that the Albanese variety of any smooth curve of positive genus that
  splits the class also 
  splits the class, and there exist many such curves splitting the class.
  We show that this can be false when the index is congruent to 
  $2$ modulo $4$, but adding a single genus $1$ factor to the Albanese
  suffices to split the class.
\end{abstract}

\setcounter{tocdepth}{1}

\section{Introduction}
\label{sec:introduction}

Our main goal in this note is to construct torsors under abelian
varieties that split Brauer classes over fields. For a variety $X$
over a field $K$ and a Brauer class $\alpha \in \Br(K)$, let
$\alpha_X$ denote the pullback of $\alpha$ to $\Br(X)$. We say that
$X$ {\em splits} $\alpha$ if $\alpha_X$ is trivial, or equivalently,
if there is a rational map from $X$ to any Brauer-Severi variety
with cohomology class $\alpha$.

\begin{thm}\label{thm:existence}
  Given a field $K$ and a Brauer class $\alpha\in\Br(K)$, there exists
  a torsor $T$ under an abelian variety over $K$ such that
  $\alpha_T=0$. Equivalently, the torsor $T$ admits a rational map to
  any Brauer-Severi variety $V$ associated to $\alpha$.
\end{thm}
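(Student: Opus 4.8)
The plan is to realize the required torsor in three stages: first reduce to a class of prime-power index; then cut out a positive-genus curve splitting $\alpha$ inside a Brauer--Severi variety of $\alpha$; and finally split $\alpha$ over the Albanese torsor of that curve, enlarged if necessary by one elliptic factor.

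\emph{Set-up and reduction.} Fix a Brauer--Severi variety $V=V_\alpha$ with class $\alpha$. Since $V$ is regular and its function field splits the division algebra underlying $\alpha$, the restriction $\alpha_V$ is trivial, so any integral $K$-variety admitting a rational map to $V$ splits $\alpha$. If $\ind(\alpha)=1$ we take $T$ to be any elliptic curve, so assume $\ind(\alpha)>1$. Writing $\alpha=\sum_\ell\alpha_\ell$ for the primary decomposition of $\alpha$ in the torsion group $\Br(K)$, and noting that a finite product $\prod_\ell T_\ell$ of torsors under abelian varieties $A_\ell$ with $(\alpha_\ell)_{T_\ell}=0$ is a torsor under $\prod_\ell A_\ell$ on which $\alpha=\sum_\ell\pr_\ell^*(\alpha_\ell)$ pulls back to $0$, we reduce to the case $n:=\ind(\alpha)=\ell^m$ with $\ell$ prime and $m\ge 1$.

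\emph{A curve splitting $\alpha$.} Next I would produce a smooth, projective, geometrically connected curve $C/K$ of positive genus together with a non-constant $K$-morphism $C\to V$; then $\alpha_C=0$. When $\dim V=n-1\ge 2$, take $C$ to be a general complete intersection of $n-2$ members of the $K$-rational very ample linear system $|\omega_V^{-1}|=|\mathcal{O}_V(n)|$: by Bertini (using Poonen's Bertini theorem when $K$ is finite) such a $C$ is smooth and geometrically connected, and it has positive genus, being an $(n,\dots,n)$ complete intersection in $\P^{n-1}$ over $\widebar K$. When $n=2$, $V$ is a conic of genus $0$; instead let $C\to V$ be the double cover branched along a general smooth member $R$ of $|\mathcal{O}_V(4)|=|\omega_V^{-2}|$, using the $K$-rational square root $\mathcal{O}_V(2)=\omega_V^{-1}$ of $\mathcal{O}_V(R)$, so that $C$ is smooth of genus $1$ by Riemann--Hurwitz. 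In all cases $\alpha_C=0$.

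\emph{Passing to the Albanese torsor.} Let $J=\Pic^0_{C/K}$ and $A=\Alb^1(C)=\Pic^1_{C/K}$, a torsor under the positive-dimensional abelian variety $J$, equipped with the Abel--Jacobi immersion $C\hookrightarrow A$, $x\mapsto[\mathcal{O}_C(x)]$. If $C$ has genus $1$ this immersion is an isomorphism, so $A=C$ already splits $\alpha$ and we are done. In general the task is to show that $A$ itself splits $\alpha$: since $\alpha_C=0$, the pull-back $\alpha_A$ dies on $C\subseteq A$, and as $C$ generates the torsor $A$ one expects $\alpha_A$ to vanish as well. To prove this I would analyze the $\alpha$-twisted invertible sheaf on $C$ supplied by the $K(C)$-point of $V$, tracking its degree and its determinant through the Abel--Jacobi map. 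I expect this to be the main obstacle, and I expect the analysis to leave a single $\Z/2$-valued obstruction that is nonzero precisely when $\ind(\alpha)\equiv 2\pmod 4$; in that case one pulls back further along $A\times_K E\to A$ for a suitably chosen genus-$1$ curve $E$ — still a torsor under an abelian variety — to annihilate the residual class. Carrying out this twisted-sheaf computation and pinning down the parity obstruction is the technical heart of the argument.
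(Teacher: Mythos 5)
There is a genuine gap at the exact point you flag as ``the technical heart of the argument'': you never prove that $\alpha_C=0$ forces $\alpha_{\Alb_C}=0$, and the heuristic you offer in its place --- that since $C$ generates the torsor $A$ ``one expects $\alpha_A$ to vanish'' --- is false as stated. The paper's Section on necessity of the hypotheses exhibits curves $C$ over local fields (index $2m$, period $m$, genus $m+1$, $m$ odd) for which a quaternion class is split by $C$ but not by $\Alb_C$, so no softness argument about $C$ generating $A$ can work; some arithmetic input comparing obstruction classes on $\Pic_{C/K}$ and $\Pic_{\Alb_C/K}$ is unavoidable. In the paper this input is: (a) a big-monodromy/deformation argument (or an appeal to Zarhin's examples) showing that after deforming $C$ over a complete dvr to a very general curve, the geometric N\'eron--Severi group of the Jacobian is $\Z\Theta$; (b) the snake-lemma comparison $0\to\Pic_{\Alb_C/K}\to\Pic_{C/K}\to\Z/g\Z\to 0$, which transfers the obstruction from the curve to the Albanese whenever $g$ is prime to the order of $\alpha$ (here one uses $\ind(\alpha)\mid 2g-2$); and (c) deformation of twisted invertible sheaves plus regularity of $\Alb_{\ms C/W}$ to move the splitting up to the generic fibre and specialize back. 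None of this (nor any substitute, such as the Antieau--Auel argument via stable birationality of symmetric powers of Brauer--Severi varieties) appears in your write-up; the twisted-sheaf ``degree and determinant'' computation you propose is exactly what needs to be carried out, and it is not routine.

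Two smaller remarks. Your primary-decomposition reduction is harmless and in fact, if you pursued it, would simplify the parity analysis you anticipate: for $\ell$ odd, $\ell^m\mid 2g-2$ forces $\gcd(g,\ell)=1$, and for $\ell=2$ with $m\ge 2$ one gets $g$ odd, while the case of index exactly $2$ is already handled by your genus-$1$ double cover of the conic --- so in your setting the residual $\Z/2$ obstruction ``when $\ind(\alpha)\equiv 2\pmod 4$'' never actually arises, and the auxiliary elliptic factor is only needed if one does not reduce to prime-power index (this is how the paper phrases Theorem \ref{thm:main}(2)). The curve-construction step (Bertini on anticanonical complete intersections, branched double cover of the conic) matches the paper and is fine; but as it stands the proposal establishes only the easy reductions, not the theorem.
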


In fact, we show that there are many such torsors splitting a given Brauer
class by studying Albanese varieties. For a smooth proper
geometrically connected curve $C$ over $K$, let
$C \to \Alb_C := \Pic^1_{C/K}$ denote the Albanese morphism for $C$,
so the Albanese variety $\Alb_C$ is a torsor under the Jacobian variety
$\Jac_C := \Pic^0_{C/K}$.

\begin{thm} \label{thm:main} Let $K$ be a field and
  $\alpha \in \Br(K)$. Write $\ind(\alpha)$ for the index of $\alpha$.
\begin{enumerate}
\item If $\ind(\alpha)\not\equiv 2\pmod 4$, then for any smooth proper
  geometrically connected 
  curve $C$ over $K$ of positive genus such that $\alpha_C=0$, we have
  that $\alpha_{\Alb_C}=0$.
\item If $\ind(\alpha)\equiv 2\pmod 4$, then for any smooth proper
  geometrically connected 
  curve $C$ over $K$ of positive genus such that $\alpha_C=0$, there
  exists a genus $1$ curve $C'$ over $K$ such that
  $\alpha_{C'\times\Alb_C}=0$.
  \end{enumerate}
\end{thm}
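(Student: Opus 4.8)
The plan is to push the splitting of $\alpha$ up from $C$ to the symmetric powers $\Sym^d C$, transfer it across the Abel--Jacobi map to $\Pic^d_C$, and then push it down to $\Pic^1_C=\Alb_C$; when that last step is obstructed — which turns out to be exactly the quaternionic situation — I would instead exhibit a genus $1$ curve that splits the class on its own. First I would reduce. We may assume $\alpha\neq 0$, so $C(K)=\varnothing$ and the period $P:=\operatorname{per}(\Alb_C)$ (the order of $[\Alb_C]$ in $H^1(K,\Jac_C)$) is at least $2$. Writing $\alpha=\sum_\ell\alpha_\ell$ for the primary decomposition, each $\alpha_\ell\in\langle\alpha\rangle$, so $(\alpha_\ell)_C=0$, $\ind(\alpha_\ell)=\ell^{v_\ell(\ind\alpha)}$, and $\alpha_{\Alb_C}=\sum_\ell(\alpha_\ell)_{\Alb_C}$. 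Since $\ind\alpha\not\equiv 2\pmod 4\iff v_2(\ind\alpha)\neq 1\iff$ every $\ind(\alpha_\ell)\not\equiv 2\pmod 4$, part (1) reduces to the case where $\alpha$ is $\ell$-primary of index $\ell^a$ with either $\ell$ odd, or $\ell=2$ and $a\neq 1$. For part (2), every primary part except possibly $\alpha_2$ has odd index, hence is handled by part (1) — so is split already by $\Alb_C$, a fortiori by $C'\times\Alb_C$ — and it remains to treat $\alpha_2$ when $\ind(\alpha_2)=2$, i.e.\ when $\alpha_2$ is a quaternion class.

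For the core of part (1), fix $d$ with $\gcd(d,\ell)=1$ and set $L=K(\Sym^d C)$. The generic point of $\Sym^d C$ is an effective divisor of degree $d$ on $C_L$; as $\alpha_{C_L}=0$, every closed point of its support has residue field splitting $\alpha_L$, so $\ind(\alpha_L)$ divides each such residue degree and hence divides $d$; combined with $\ind(\alpha_L)\mid\ind\alpha=\ell^a$ this gives $\ind(\alpha_L)\mid\gcd(d,\ell^a)=1$, so $\alpha_{\Sym^d C}=0$. If in addition $d\ge g$, the Abel--Jacobi map $\Sym^d C\to\Pic^d_C$ is surjective with generic fibre a form of $\P^{\,d-g}$, i.e.\ a Severi--Brauer variety over $K(\Pic^d_C)$ of some class $\gamma_d$ with $(d-g+1)\gamma_d=0$, and $K(\Sym^d C)$ is its function field; by Amitsur's theorem $\alpha_{K(\Pic^d_C)}\in\langle\gamma_d\rangle$, so, $\alpha$ being $\ell$-primary, $\alpha_{K(\Pic^d_C)}$ has order dividing $\ell^{\,v_\ell(d-g+1)}$. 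Finally, multiplication by $d$ on $\Jac_C$ yields a finite flat surjection $\Alb_C=\Pic^1_C\to\Pic^d_C$ (these are the $\Jac_C$-torsors of classes $c$ and $dc$), hence $K(\Pic^d_C)\subseteq K(\Pic^1_C)$; since $\Pic^d_C$ is smooth and proper, $\Br(\Pic^d_C)\hookrightarrow\Br(K(\Pic^d_C))$, so $\alpha_{\Pic^d_C}=0\Rightarrow\alpha_{\Alb_C}=0$. Thus part (1) follows once one finds $d\ge g$ with $\gcd(d,\ell)=1$ and $\ell\nmid d-g+1$. If $\ell$ is odd this is immediate (the two forbidden residues $0$ and $g-1$ leave room modulo $\ell\ge 3$), and if $\ell=2$ with $g$ odd one takes $d=g$, since then $d-g+1=1$.

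The hard part will be $\ell=2$ with $g$ even (necessarily $a\ge 2$): every odd $d\ge g$ has $d-g+1$ even, so the argument above only shows that $\alpha_{\Pic^d_C}$ has order dividing $2$, and to finish one must genuinely use $a\ge 2$. The idea I would pursue is that a $2$-primary class of index $\ge 4$ split by $C$ corresponds, via twisted sheaves, to an invertible $\alpha$-twisted line bundle on $C$ whose degree modulo $\ind(C)$ is divisible by a higher power of $2$ than is possible for a quaternion class, and that this extra divisibility sharpens the control on $\gamma_d$ — equivalently, lets one replace the Abel--Jacobi family by a marginally better auxiliary family — enough to force $\alpha_{\Pic^d_C}=0$ for a suitable $d$ even when $g$ is even. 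Carrying this out, and checking that it breaks down precisely for quaternion classes (consistently with the counterexample promised in the abstract, where the residual $2$-torsion obstruction on $\Alb_C$ cannot be removed), is where I expect the real work to lie.

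For part (2) it remains to split a quaternion class $\alpha_2$, with conic $V$, over $C'\times\Alb_C$ for a well-chosen genus $1$ curve $C'$. Since $\alpha_2\neq 0$, $V$ has no rational point, $\Pic(V)=\Z\cdot\mathcal O_V(1)$ is the index-$2$ subgroup of $\Pic(V_{\bar K})=\Z$, and $V$ carries a $K$-rational reduced effective $0$-cycle $B$ of degree $4$ (e.g.\ a sum of two distinct closed points of degree $2$) whose line bundle $\mathcal O_V(B)$ has $\mathcal O_V(1)$ as a square root in $\Pic(V)$; hence the double cover $C'\to V$ branched along $B$ descends to $K$. By Riemann--Hurwitz $C'$ is a smooth curve of genus $1$, and $\alpha_{C'}=0$ because $C'$ maps to $V$. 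Then $C'\times\Alb_C$ splits $\alpha_2$ (as $C'$ already does and $K(C')\subseteq K(C'\times\Alb_C)$) and splits the odd part $\alpha-\alpha_2$ (as $\Alb_C$ does, by part (1) for odd primes), hence splits $\alpha$; and $C'\times\Alb_C$ is a torsor under the abelian variety $\Jac_{C'}\times\Jac_C$, which incidentally recovers Theorem~\ref{thm:existence}.
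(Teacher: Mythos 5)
Your part (1) argument, as written, is incomplete at exactly the point you flag: you leave unresolved the case of a $2$-primary class of index $2^a$, $a\ge 2$, when $g$ is even, and the idea you sketch for it (extra divisibility of degrees of invertible twisted sheaves improving the control on $\gamma_d$) is not a proof. The decisive observation you are missing is that this case is vacuous. Since $\alpha_C=0$ and $g\ge 2$, any nonzero global section of $\omega_C$ (which exists over $K$ because $h^0(\omega_C)=g>0$) has an effective $K$-rational divisor of degree $2g-2$, so the same residue-field divisibility argument you already use for $\Sym^d C$ gives $\ind(\alpha)\mid 2g-2$. As $\ind(\alpha_2)\mid\ind(\alpha)$, the hypothesis $a\ge 2$ forces $4\mid 2(g-1)$, i.e.\ $g$ odd, and then your own choice $d=g$ finishes the $\ell=2$ case. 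This divisibility $\ind(\alpha)\mid 2g-2$ is precisely the arithmetic fact on which the paper's reduction rests (it yields $\gcd(g,\ind(\alpha))=1$ whenever $\ind(\alpha)\not\equiv 2\pmod 4$), so what you diagnose as ``the real work'' is a one-line remark you already had the tools to make.

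With that inserted, your route is correct and is essentially the Antieau--Auel proof reproduced in the paper: $\Sym^d C$ splits $\alpha$ for $\gcd(d,\ell)=1$, the Abel--Jacobi map exhibits $\Sym^d C\to\Pic^d_C$ generically as a Severi--Brauer variety of dimension $d-g$, Amitsur bounds the order of $\alpha$ over $K(\Pic^d_C)$ by $d-g+1$, and the $d$-th power map $\Pic^1_C\to\Pic^d_C$ transports the vanishing to $\Alb_C$; your flexibility in choosing $d$ replaces their fixed $d=2g-1$ together with the gcd argument. This is genuinely different from the paper's primary proof, which deforms $C$ over a complete dvr to a curve whose Jacobian has N\'eron--Severi group $\Z\Theta$ (big monodromy), compares $\Pic_{\Alb}$ with $\Pic_C$ via Lemma \ref{lem:obs}, and specializes back. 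Your part (2) matches the paper's reduction via a genus $1$ double cover of the conic; two small slips there: the generator of $\Pic(V)$ for a pointless conic is $\omega_V^{-1}=\mathcal O_V(2)$, not $\mathcal O_V(1)$ (which is exactly why your square root is defined over $K$), and the injectivity $\Br\hookrightarrow\Br$ of the function field should be invoked for the smooth proper variety $\Alb_C$, the target of the pullback, rather than for $\Pic^d_C$.
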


Note that there are many curves splitting any given Brauer class $\alpha$
of index $m$. For example, for $m \geq 3$, any complete intersections of $m-2$ sections
of the anticanonical sheaf of an associated $(m-1)$-dimensional
Brauer-Severi variety will split the class, and a general such
complete intersection is a smooth proper geometrically connected curve
by Bertini's theorem. (This argument needs to be slightly tweaked if $K$ is
finite, but in that case the theorems above are trivial since $\Br(K)=0$.)
Thus, Theorem \ref{thm:existence} follows immediately from Theorem \ref{thm:main}.

This result grew out of considering the following well-known question.

\begin{ques}\label{ques:main}
  Given a field $K$ and a Brauer class $\alpha\in\Br(K)$, is there a
  genus 1 curve $C$ over $K$ such that $\alpha_C=0$?
\end{ques}

This question was asked explicitly by Pete L.~Clark on his website and
in \cite{openproblems}, and an affirmative answer was given
when $\alpha$ has index $3$ by Swets
\cite{Swets}, index at most $5$ in \cite{ajdjwh}, and index $6$
under some assumptions on $K$ by Auel \cite{auel-video}. The techniques in
\cite{ajdjwh,auel-video} are unlikely to generalize to higher index, however, and
the general question seems quite difficult.

To prove Theorem \ref{thm:main}, we use the basic theory of big
monodromy to deform a curve splitting the class $\alpha$ to a curve
whose Jacobian has minimal N\'eron-Severi group. When the N\'eron-Severi group
is minimal, it is relatively easy to compare
obstruction classes for sections of the Picard scheme of the curve
with sections of the Picard scheme of its Albanese, giving the result
for the general curve. Specializing back to the Albanese of the original curve
finishes the proof.

\begin{assumption}
  Since the Brauer group of a finite field is trivial, we assume from
  now on that $K$ is an infinite field.
\end{assumption}

\subsection*{Acknowledgments}
We thank Benjamin Antieau, Asher Auel, Bhargav Bhatt, Daniel Bragg, Pete L.~Clark,
Brendan Creutz, Jean-Louis Colliot-Th\'el\`ene, Aise Johan de Jong, Gavril
Farkas, Charles Godfrey, Marc Hindry, Daniel Krashen, Davesh Maulik, Hirune Mendebaldeko,
Ben Moonen, Brian Osserman, David Saltman, Alexei Skorobogatov,
Bianca Viray, and Kristin de Vleming for helpful
comments during the preparation of this note.

We thank Hindry and Skorobogatov for pointing us to relevant work
of Zarhin. We thank Moonen for bringing the appendix to \cite{1711.07722}
to our attention. We thank Antieau and Auel for
finding an error in our treatment of classes of period $2$ and suggesting a
fix. After receiving our preprint, Antieau and Auel also devised an alternative
proof of the main splitting result using work on the stable birational geometry
of symmetric powers of Brauer--Severi varieties; we thank them for letting us
reproduce a sketch of their argument here. WH and ML were supported by NSF
grants DMS-1701437 and DMS-1600813, respectively.

\section{The Jacobian of the general curve}
\label{sec:idea}

Our overarching goal in this section is to describe a proof of the
following folk theorem. Write $\ms M_g$ for the stack of smooth proper
geometrically connected curves of genus $g\geq 2$. This is a smooth algebraic
stack over $\Spec\Z$ with irreducible geometric fibers (see, e.g.,
\cite{delignemumford}). 

\begin{prop}\label{prop:univ-jac}
  Suppose $C$ is a smooth proper geometrically connected
  curve of genus $g$ over an algebraically closed field $k$ such that the
  induced map $\Spec k\to\ms M_g$ sends the point of $\Spec k$
  to the generic point of a fiber of $\ms M_g\to\Spec\Z$. Then we have
  that $\NS(\Jac_C)=\Z\Theta$, where $\Theta$ is the class of the
  $\Theta$-divisor on $\Jac_C$. 
\end{prop}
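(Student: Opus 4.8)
The plan is to show that for a curve $C$ corresponding to the generic point of a fiber $\ms M_g\to\Spec\Z$, the Néron–Severi group of $\Jac_C$ is as small as it can possibly be, namely generated by the theta class. Since $\Jac_C$ is a principally polarized abelian variety via $\Theta$, the class $\Theta$ always lies in $\NS(\Jac_C)$ and the intersection form restricted to $\Z\Theta$ is nondegenerate, so the point is to rule out any extra classes. First I would recall that $\End(\Jac_C)\otimes\Q$ and $\NS(\Jac_C)\otimes\Q$ are closely tied: via the Rosati involution attached to $\Theta$, $\NS(\Jac_C)\otimes\Q$ is identified with the subspace of $\End(\Jac_C)\otimes\Q$ fixed by Rosati. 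So it suffices to show $\End(\Jac_C)=\Z$ for the generic curve.

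The main step is to establish that the geometric generic Jacobian has trivial (i.e. $\Z$) endomorphism ring, and this is where big monodromy enters. The monodromy representation of $\pi_1(\ms M_g)$ on $\H^1$ of the universal curve (equivalently on the $\ell$-adic Tate module of the universal Jacobian) has image Zariski-dense in $\Sp_{2g}$ — this is classical, going back to the irreducibility of the moduli space and the theory of Dehn twists / Picard–Lefschetz, and over $\Spec\Z$ one uses that the geometric monodromy group is all of $\Sp_{2g}(\widehat\Z)$ (or at least Zariski-dense in $\Sp_{2g}$ over $\Q_\ell$). Any endomorphism of the generic Jacobian must commute with this monodromy action; since the standard representation of $\Sp_{2g}$ is irreducible and its commutant is just the scalars, we get $\End(\Jac_C)\otimes\Q_\ell$ consists of scalars, hence $\End(\Jac_C)=\Z$. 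Concretely, I would: (i) reduce to the geometric generic fiber, i.e. base change to an algebraic closure of the function field of the fiber; (ii) invoke big geometric monodromy for $\ms M_g$ over the given characteristic; (iii) deduce $\NS(\Jac_C)\otimes\Q = \Q\Theta$ from the commutant computation together with the Rosati fixed-point description; (iv) upgrade from $\Q$-coefficients to $\Z$-coefficients using that $\NS$ is torsion-free and $\Theta$ is primitive — the latter because $\Theta$ is a principal polarization, so it is not a nontrivial multiple of any class (any $D$ with $\Theta = nD$ would give a polarization of degree $1/n^{2g}$, impossible for $n>1$).

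The step I expect to be the main obstacle is (ii), the input on big monodromy in \emph{arbitrary characteristic}, including the generic point of the characteristic $p$ fibers of $\ms M_g\to\Spec\Z$. Over $\C$ this is completely classical, but one must be careful to cite the right statement in positive characteristic — this is due to work growing out of Deligne–Mumford and was made precise by, e.g., the theory of the $\ell$-adic monodromy of $\ms M_g$ (and is where the hypothesis $g\ge 2$, and the irreducibility of the geometric fibers quoted in the excerpt, really get used). A secondary subtlety is making sure the passage from "commutant of the monodromy image is scalars" to "$\End(\Jac_C)\otimes\Q_\ell$ is scalars" is valid: this uses that the $\ell$-adic Tate module is a faithful $\End(\Jac_C)\otimes\Q_\ell$-module and that endomorphisms are detected on the Tate module (Tate-module faithfulness of $\End$, which holds integrally). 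Everything else — the Rosati correspondence between $\NS$ and symmetric endomorphisms, and the primitivity of $\Theta$ — is standard abelian-variety theory and I would treat it as routine.
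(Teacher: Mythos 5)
Your proposal is correct in outline, but it takes a genuinely different route from the paper. You reduce to showing $\operatorname{End}(\Jac_C)=\Z$ via the commutant of the monodromy image acting on $\H^1$ (Schur's lemma for the standard representation of $\Sp(2g)$), and then pass to $\NS$ through the Rosati involution; the paper instead works directly on $\H^2$: the first Chern class lands in the Galois-invariants of $\bigwedge^2\H^1$, and the primitive part $V_2=\ker(\bigwedge^2\H^1\to\Qlbar)$ is an irreducible $\Sp(2g)$-representation, so the invariants have rank at most one (Proposition \ref{prop:NSgeomgenfiber}), with primitivity of $\Theta$ finishing as in your step (iv). Your route trades the irreducibility of $V_2$ for the $\NS$--$\operatorname{End}$ correspondence, and it is in fact close in spirit to the paper's third argument (Section \ref{sec:zarhin}), where Zarhin's computation $\operatorname{End}=\Z$ for explicit hyperelliptic Jacobians is used in exactly this way. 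The more substantive divergence is the monodromy input you flag as the main obstacle: the paper deliberately avoids invoking big monodromy of $\ms M_g$ itself in characteristic $p$, instead citing Katz--Sarnak families over finite fields with full symplectic geometric monodromy (Theorem \ref{thm:ks}, Corollary \ref{cor:big-ell-mono}) and using that N\'eron--Severi rank can only grow under specialization to transfer the conclusion to the generic point of the fiber (with the Hodge-theoretic argument of Corollary \ref{cor:big-mono} available in characteristic $0$); your citation of big monodromy for $\ms M_g$ in all characteristics is a true but heavier statement, for which the paper points to Moonen's appendix. One small imprecision to fix: endomorphisms of the geometric generic Jacobian are only defined over finite separable extensions, so they commute merely with an open subgroup of the Galois/monodromy group; this is harmless because passing to finite index does not change the identity component of the Zariski closure, which is exactly the device the paper uses for N\'eron--Severi classes in Proposition \ref{prop:NSgeomgenfiber}, but it should be said explicitly.
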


The proof can be achieved using the theory of big monodromy in the
Hodge or $\ell$-adic context. The $\ell$-adic proof applies in all
characteristics, while the Hodge-theoretic proof only applies in
characteristic $0$. Since we assume that the reader may not
be intimately familiar with the theory, we briefly sketch both
arguments here. The results that prove this Proposition are Corollary
\ref{cor:big-ell-mono} and Corollary \ref{cor:big-mono} below. We
also refer the reader to another exposition via $\ell$-adic methods
by Moonen in the appendix
\cite{moonen-appendix} to \cite{1711.07722}.
There is also a direct proof using results of Zarhin \cite{MR1748293, MR2131907}
(building on earlier work of Mori); see Section \ref{sec:zarhin}.

\subsection{Representation theory and notation}
\label{sec:notation}

We will use the following notation and results in this section.
\begin{enumerate}
\item Given a ring $R$, we write
  $\Sp(2g, R)$ for the symplectic group associated to the standard
  symplectic form of dimension $2g$. We write $\GL(n, R)$ for the
  algebraic general linear group over $R$ (not just the $R$-points). Given a
  field $L$ and an $L$-vector space $V$, we will write $\GL(V)$ for
  the algebraic group of automorphisms of $V$ (not just the
  $L$-points). This is non-canonically isomorphic to $\GL(\dim_L V,L)$.
\item\label{monod-gp} Given an abstract group $\pi$ and a representation
  $\rho\colon\pi\to\GL(n, L)$ over a field $L$, we will write
  $G(\rho)\subset\GL(n, L)$ for the connected component of the
  identity of the Zariski closure of the image of $\rho$. If $\mathbf
  V\to B$ is a local system of $L$-vector spaces on a topological space, we will write
  $G(\mathbf V)$ for $G(\rho)$, where
  $\rho\colon\pi_1(B,b)\to\GL(V)$ is the monodromy representation attached to
  $\mathbf V$ and $V = \mathbf V_b$.
\item\label{repn-thry} Let $V$ be a vector space over a field $L$ of dimension $2g$ equipped with
  the standard symplectic form. The  pairing defines an $\Sp(2g, L)$-invariant map
  $\bigwedge^2V\to L$. The kernel $V_2\subset\bigwedge^2 V$ of the pairing map is an absolutely
  irreducible representation of $\Sp(2g, L)$; see \cite[Theorem 17.5]{fultonharris}.

\end{enumerate}

\subsection{The N\'eron-Severi sheaf}
\label{sec:neron-severi-sheaf}

Suppose $X\to\Spec k$ is a smooth proper geometrically connected
variety over a field $k$. (The theory we describe here generalizes, but we
avoid such a digression.) Let $\Pic_{X/k}$ be the
Picard scheme of $X$ over $k$ and $\Pic^0_{X/k}$ its connected
component. When $X$ is a curve, the Jacobian variety $\Jac_X$ is
identified with $\Pic^0_{X/k}$.

\begin{defn}\label{defn:NS}
  The \emph{N\'eron-Severi sheaf\/} of $X$ is the fppf quotient sheaf
  $$\NS_{X/k}=\Pic_{X/k}/\Pic^0_{X/k}.$$
\end{defn}

By construction, the sheaf $\NS_{X/k}$ is representable by an \'etale group
scheme over $k$. (The key point is that the tangent space at the identity
section is trivial, which one can see using the fact that $\NS_{X/k}$ is also
the sheaf of connected components of $\Pic_{X/k}$.) The classical N\'eron-Severi
group $\NS(X)$ is defined as $\Pic(X)/\Pic^0(X)$. With this notation, we see
that $\NS(X)=\NS_{X/k}(k)$ if $k$ is algebraically closed.

\begin{observation}\label{obs:ns}
  Since $\NS_{X/k}$ is \'etale, we have that for any extension
  $L\subset L'$ of separably closed extension fields of $k$, the
  induced map $\NS_{X/k}(L)\to\NS_{X/k}(L')$ is an isomorphism. In
  particular, given a separable closure $k^s$ contained in an
  algebraic closure $\overline{k}$ of $k$, we have $\NS_{X/k}(k^s)
  \cong \NS_{X/k}(\overline{k})$. In particular, any N\'eron-Severi class on $X_{\overline{k}}$ is defined over some finite
  separable extension of $k$. Under the additional assumption
  that $\Pic^0_{X/k}$ is smooth (e.g., for $X$ a curve or an abelian
  variety), we have that any N\'eron-Severi class defined over $k^s$ is induced
  by an invertible sheaf on $X\tensor_k k^s$. (Indeed, in this case the fppf and
  \'etale cohomology of $\Pic^0_{X/k}$ agree by Grothendieck's theorem
  \cite[Th\'eor\`eme 11.7]{MR244271}, so we have that $\H_{\text{\rm fppf}}^1(\Spec k^s,\Pic^0_{X/k})=0$.)
\end{observation}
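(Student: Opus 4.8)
The plan is to separate the two distinct assertions packaged in the observation: the statements about the group scheme $\NS_{X/k}$ on its own (the isomorphisms on points and the descent to a finite separable extension), which are formal consequences of \'etaleness, and the final statement relating N\'eron--Severi classes to honest invertible sheaves, which genuinely uses the smoothness hypothesis through a cohomological comparison.

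First I would treat the \'etale-scheme statements. Since $\NS_{X/k}$ is \'etale over $k$, Zariski-locally it is a disjoint union of spectra $\Spec k_i$ with each $k_i/k$ finite separable. For any separably closed extension $L\supseteq k$ one then has $\NS_{X/k}(L)=\coprod_i\Hom_k(k_i,L)$, and $\#\Hom_k(k_i,L)=[k_i:k]$ because $k_i/k$ is separable and $L$ is separably closed. Given an inclusion $L\subseteq L'$ of such fields, each $\Hom_k(k_i,L)\inj\Hom_k(k_i,L')$ is an injection between sets of the same finite cardinality, hence a bijection; summing over $i$ gives the asserted isomorphism $\NS_{X/k}(L)\simto\NS_{X/k}(L')$. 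Taking $L=k^s$ and $L'=\overline{k}$ yields $\NS_{X/k}(k^s)\cong\NS_{X/k}(\overline{k})$. For the descent statement I would use that an \'etale $k$-scheme is locally of finite presentation, so $\NS_{X/k}(k^s)=\varinjlim_{k'}\NS_{X/k}(k')$ over finite separable subextensions $k'\subseteq k^s$; combined with the previous isomorphism, every class over $\overline{k}$ already lives over some finite separable $k'/k$.

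For the final assertion I would work with the short exact sequence of fppf sheaves $0\to\Pic^0_{X/k}\to\Pic_{X/k}\to\NS_{X/k}\to0$ and pass to sections over $\Spec k^s$, obtaining an exact sequence $\Pic_{X/k}(k^s)\to\NS_{X/k}(k^s)\to\H^1_{\text{\rm fppf}}(\Spec k^s,\Pic^0_{X/k})$. Thus it suffices to show the group on the right vanishes. When $\Pic^0_{X/k}$ is smooth, Grothendieck's comparison theorem \cite[Th\'eor\`eme 11.7]{MR244271} identifies this fppf $\H^1$ with the corresponding \'etale $\H^1$, which is zero because $\Spec k^s$ is the spectrum of a separably closed field and so has trivial higher \'etale cohomology. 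Hence $\Pic_{X/k}(k^s)\to\NS_{X/k}(k^s)$ is surjective. To identify $\Pic_{X/k}(k^s)$ with genuine line bundles I would invoke the low-degree terms of the Leray sequence for $\G_m$ along $X_{k^s}\to\Spec k^s$: since $X$ is proper and geometrically connected one has $\H^0(X_{k^s},\G_m)=k^{s\times}$, and since $\Br(k^s)=0$ the natural map $\Pic(X_{k^s})\to\Pic_{X/k}(k^s)$ is an isomorphism. Composing, every class in $\NS_{X/k}(k^s)$ is the image of an invertible sheaf on $X\tensor_k k^s$.

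The genuinely delicate point is this final assertion rather than the formal \'etale statements: one must not drop the smoothness hypothesis, since for non-smooth $\Pic^0_{X/k}$ (as can occur in positive characteristic) the fppf group $\H^1_{\text{\rm fppf}}(\Spec k^s,\Pic^0_{X/k})$ need not vanish and the connecting map can be nonzero, so the obstruction to lifting an $\NS$-class to an actual line bundle is real. The comparison of fppf and \'etale cohomology for smooth group schemes, together with the vanishing of \'etale cohomology over a separably closed field, is exactly what removes this obstruction, which is why the hypothesis is recorded explicitly.
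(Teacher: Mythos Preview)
Your proof is correct and follows essentially the same approach as the paper: the paper's own justification is just the parenthetical invoking Grothendieck's comparison theorem to get $\H^1_{\text{fppf}}(\Spec k^s,\Pic^0_{X/k})=0$, and you have fleshed this out with the exact sequence and the (tacitly assumed in the paper) identification $\Pic(X_{k^s})\cong\Pic_{X/k}(k^s)$ via $\Br(k^s)=0$. Your treatment of the \'etale-scheme portion is more detailed than the paper's bare assertion but entirely standard and in the same spirit.
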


\subsection{Big monodromy: $\ell$-adic realization}
\label{sec:big-monodromy-ell}

We first show that the N\'eron-Severi group of the Jacobian of the geometric generic fiber
for a curve is generated by the class of the $\Theta$-divisor if the relevant Galois representation
has large image. We then use results of Katz--Sarnak to find families of curves over finite fields
with large monodromy.

\begin{prop}\label{prop:NSgeomgenfiber}
  Let $k$ be any field. Fix a prime $\ell$ invertible in $k$ and assume $k$
  contains all $\ell$-power roots of unity, and fix an isomorphism $\overline \Q_\ell \cong
  \overline \Q_\ell(1)$ of Galois modules.
  Let $C$ over $k$ be a smooth proper geometrically connected
  curve of genus $g$ such that the identity component of the Zariski closure of the image of the Galois representation
  $$\rho_0: \Gal(\overline{k}/k) \to \Sp(2g,\Qlbar)$$
  induced by the Galois action on $\H^1(C_{\overline{k}},\Qlbar)$
  is all of $\Sp(2g,\Qlbar)$.
  Then the geometric generic fiber $\overline{C} := C_{\overline{k}}$ has the
  property that $\NS(\Jac_{\overline{C}})=\Z\Theta$.
\end{prop}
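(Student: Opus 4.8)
The plan is to identify the N\'eron--Severi group of the Jacobian with a space of Galois-fixed (or, after passing to the geometric fundamental group, monodromy-fixed) Tate classes inside $\bigwedge^2 \H^1$, and then to compute that invariant space using the big monodromy hypothesis together with the representation theory recalled in \S\ref{sec:notation}\eqref{repn-thry}. Concretely, write $V = \H^1(C_{\overline k},\Qlbar)$, a $2g$-dimensional $\Qlbar$-vector space equipped with the symplectic cup-product pairing, on which $\Gal(\overline k/k)$ acts through $\rho_0$. For the abelian variety $A = \Jac_{\overline C}$ over the (separably, hence algebraically) closed field $\overline k$, the first Chern class / cycle class map realizes $\NS(A)\otimes\Qlbar$ as a subspace of $\H^2(A_{\overline k},\Qlbar(1)) \cong \bigwedge^2 \H^1(A_{\overline k},\Qlbar(1))$, and under the principal polarization $\H^1(A) \cong \H^1(C)^{\vee} \cong \H^1(C)(1)$ canonically (using the fixed isomorphism $\Qlbar\cong\Qlbar(1)$), so $\NS(A)\otimes\Qlbar \hookrightarrow \bigwedge^2 V$. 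The $\Theta$-class lands in this picture as the symplectic form itself, i.e.\ as a generator of the one-dimensional space spanned by the pairing element dual to $V$.

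Next I would decompose $\bigwedge^2 V$ as a representation of $\Sp(2g,\Qlbar)$: by \S\ref{sec:notation}\eqref{repn-thry} there is an $\Sp$-equivariant surjection $\bigwedge^2 V \to \Qlbar$ given by the pairing, whose kernel $V_2$ is absolutely irreducible and nontrivial, so $\bigwedge^2 V = \Qlbar \oplus V_2$ as $\Sp(2g,\Qlbar)$-modules, where $\Qlbar$ is the trivial summand spanned by the symplectic form $\omega$. Now the big monodromy hypothesis says the identity component $G(\rho_0)$ of the Zariski closure of $\rho_0(\Gal(\overline k/k))$ is all of $\Sp(2g,\Qlbar)$. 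A N\'eron--Severi class on $A$ is automatically defined over a finite separable extension of $k$ (Observation \ref{obs:ns}), hence is fixed by an open subgroup of $\Gal(\overline k/k)$; its image in $\bigwedge^2 V$ is therefore fixed by a finite-index subgroup of $\rho_0(\Gal(\overline k/k))$, and since the relevant algebraic group is connected, such a class is fixed by all of $G(\rho_0) = \Sp(2g,\Qlbar)$. Thus $\NS(A)\otimes\Qlbar \subseteq (\bigwedge^2 V)^{\Sp(2g,\Qlbar)} = \Qlbar\cdot\omega = \Qlbar\cdot\Theta$. Since $\NS(A)$ is a finitely generated free abelian group containing $\Theta$ (the class of an ample divisor, hence nonzero and primitive because the polarization is principal), this forces $\NS(A) = \Z\Theta$.

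The main obstacle — really the only nonformal point — is justifying cleanly that the cycle/Chern class map on $\NS$ is injective with values in the subspace of Tate classes cut out by the monodromy, uniformly in characteristic. In characteristic zero one can invoke the Lefschetz $(1,1)$-theorem or simply the injectivity of $c_1$ modulo torsion, but to make the $\ell$-adic argument work in all characteristics I would instead use only the standard facts that $\NS(A)$ is finitely generated and that $c_1\colon \NS(A)\otimes\Qlbar \to \H^2_{\et}(A_{\overline k},\Qlbar(1))$ is injective, together with the Galois-equivariance of the cycle class map — no Tate conjecture is needed for the containment I want, only for a converse I am not claiming. A secondary care point is the bookkeeping of Tate twists: I must use the hypothesis that $k$ contains all $\ell$-power roots of unity and the fixed isomorphism $\Qlbar\cong\Qlbar(1)$ precisely so that $\rho_0$ genuinely lands in $\Sp(2g,\Qlbar)$ (rather than $\mathrm{GSp}$ with similitude character the cyclotomic character), so that "Galois-invariant after restriction to an open subgroup" translates to "$\Sp(2g,\Qlbar)$-invariant"; I would state this twist normalization explicitly at the start of the proof and then proceed as above.
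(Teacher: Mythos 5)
Your proposal is correct and follows essentially the same route as the paper: inject $\NS(\Jac_{\overline C})\otimes\Qlbar$ via $c_1$ into $\bigwedge^2\H^1$, use Observation \ref{obs:ns} plus the fact that finite-index passage preserves the identity component to reduce to $\Sp(2g,\Qlbar)$-invariants, and invoke the irreducibility of $V_2$ to get a one-dimensional invariant space, finishing with the indivisibility of the principal polarization. The only cosmetic difference is that you compute $(\bigwedge^2 V)^{\Sp}=\Qlbar\cdot\omega$ via the direct-sum decomposition, whereas the paper shows the Galois-invariants meet the kernel $V$ of the pairing trivially and composes with the pairing map; these are the same argument.
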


\begin{proof}
    The first Chern class defines a morphism
    $$c: \NS(\Jac_{\overline{C}}) \to \H^2(\Jac_{\overline{C}},\Qlbar),$$
    which is injective modulo torsion.
    (Note that we may ignore Tate twists in this proof since we fixed an isomorphism
    $\overline \Q_\ell \cong \overline \Q_\ell(1)$ above.)
    Because any class of $\NS(\Jac_{\overline{C}})$ is defined
    over a finite separable extension of $k$ by Observation \ref{obs:ns}, the image
    of $c$ is contained in the union of all the subspaces $\H^2(\Jac_{\overline{C}},\Qlbar)^\Gamma$,
    where $\Gamma$ ranges over open subgroups of the
    absolute Galois group $G_k := \Gal(\overline{k}/k)$.
    
    By assumption $G(\rho_0) = \Sp(2g,\Qlbar)$, but also for
    any open subgroup $\Gamma \subset G_k$, the group $G(\rho_0|_\Gamma)$
    is $\Sp(2g,\Qlbar)$, since passage to finite index subgroups does not change
    the identity component of the Zariski closure.
    
    The cup product pairing defines a Galois-invariant map
    $$p: \H^2(\Jac_{\overline{C}},\Qlbar) = \bigwedge\nolimits^2 \H^1(\Jac_{\overline{C}},\Qlbar)  
    = \bigwedge\nolimits^2 \H^1(\overline{C},\Qlbar) \to \Qlbar$$
    with kernel $V$.
    For an open subgroup $\Gamma \subset G_k$, the space
    $\H^2(\Jac_{\overline{C}},\Qlbar)^\Gamma \cap V$ is a subspace
    stable under $\Gamma$, hence under $G(\rho_0|_\Gamma) = \Sp(2g,\Qlbar)$. Since $V$ is an
    irreducible representation of $\Sp(2g,\Qlbar)$ by Section \ref{sec:notation}\eqref{repn-thry},
    the intersection must be $0$.
    
    We thus find that the composition map
    $$\NS(\Jac_{\overline{C}}) \otimes \Qlbar \stackrel{c}{\longrightarrow}
    \bigcup_{\Gamma \subset G_k} \H^2(\Jac_{\overline{C}},\Qlbar)^\Gamma \hookrightarrow
    \H^2(\Jac_{\overline{C}},\Qlbar) \stackrel{p}{\longrightarrow} \Qlbar$$
    is an isomorphism (since $\NS(\Jac_{\overline{C}})$ is not $0$).
    
    As a consequence, we have an injection
    $\NS(\Jac_{\overline{C}}) \hookrightarrow \NS(\overline{C})$.
    It is well known that the pullback of the $\Theta$-divisor class
    to $\overline{C}$ has degree $g$ (see, e.g., \cite[Theorem
    17.4]{MR1987784}), and since $\Theta$ is a principal polarization,
    it is indivisible in $\NS(\Jac_{\overline{C}})$. We therefore have
    $\NS(\Jac_{\overline{C}}) = \Z \Theta$; furthermore, the injection
    $\NS(\Jac_{\overline{C}}) \hookrightarrow \NS(\overline{C})$ is
    identified with $\Z \to g\Z \hookrightarrow \Z$.
\end{proof}

In \cite[Chapter 10]{MR1659828}, Katz and Sarnak produce families of
curves over finite fields with large monodromy groups. Recall that,
given a family of curves $f\colon C\to B$ over a finite field $\Fq$
with $\ell$ an invertible prime, the \emph{geometric monodromy\/}
group $G_{\geom}$ of $f$ is the $\Qlbar$-algebraic group $G(\rho)$
associated to the representation
$$\rho\colon\pi_1(B \otimes_{\Fq} \overline\Fq)\to\Sp(2g,\Qlbar)$$
attached to the lisse sheaf $\R^1f_\ast\Qlbar$ (as in Section
\ref{sec:notation}\eqref{monod-gp}). The following theorem is a
summary of \cite[Theorem 10.1.16 and Theorem 10.2.2]{MR1659828}.

\begin{thm}[Katz--Sarnak]\label{thm:ks}
  For any genus $g$ and any finite field $\Fq$ with $\ell$ an
  invertible prime, there is an open subset $U\subset\A^1_{\Fq}$ and a
  family $\mc C\to U$ of smooth proper geometrically connected genus
  $g$ curves such that $G_\geom=\Sp(2g,\Qlbar)$.
\end{thm}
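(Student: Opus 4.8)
The plan is to extract the statement directly from the Katz--Sarnak monograph, citing the precise results and explaining how to assemble them into the form we need. The starting point is \cite[Theorem 10.1.16]{MR1659828}, which produces, for each $g \geq 1$ and each finite field $\F_q$ (with $\ell$ invertible), a universal family of hyperelliptic curves of genus $g$ over a parameter scheme that is an open subscheme of an affine space, and computes that the geometric monodromy group of the associated lisse sheaf $\R^1 f_\ast \Qlbar$ is the full symplectic group $\Sp(2g,\Qlbar)$. The key input there is the analysis of the monodromy of families of double covers branched at varying sets of points, together with the classification of possible monodromy groups (Kazhdan--Margulis type arguments, or the classification of pairs $(V,G)$ with $G$ acting irreducibly and satisfying the relevant constraints). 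For non-hyperelliptic situations in low genus, or to get a family directly over an open subset of $\A^1$ rather than a higher-dimensional base, one invokes \cite[Theorem 10.2.2]{MR1659828}, which provides one-parameter families (Lefschetz pencils, or explicit pencils of plane curves) with full geometric monodromy.

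First I would fix $g$ and $\F_q$ and pick whichever of the two cited theorems applies; say we obtain a smooth proper family $\mc C \to B$ of genus $g$ curves over a smooth affine $\F_q$-scheme $B$ with $G_\geom = \Sp(2g,\Qlbar)$. Next, to arrange that the base is an open subset of $\A^1_{\F_q}$ as claimed, I would restrict the family to a general affine line in $B$: by a Bertini-type argument over the (possibly finite) field $\F_q$ — or rather, using the fact that $B$ contains a curve through a general point, combined with the Lefschetz-style statement that restricting a lisse sheaf with big monodromy to a sufficiently general curve in the base preserves the geometric monodromy group (this is exactly the kind of statement proved in \cite[Chapter 10]{MR1659828} and is how Katz--Sarnak themselves reduce to one-parameter families) — one finds a smooth affine curve $B_0 \subset B$ such that $\mc C \times_B B_0 \to B_0$ still has geometric monodromy $\Sp(2g,\Qlbar)$. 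Normalizing and shrinking, $B_0$ maps to an open $U \subset \A^1_{\F_q}$; after a further shrink of $U$ we may take $B_0 = U$. Then $\mc C_U \to U$ is the desired family.

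The main obstacle is purely bookkeeping: matching the normalizations in \cite{MR1659828} (which works with a variety of explicit families and with $G_\geom$ defined via $\pi_1$ of the geometric base) to the clean statement we want, and in particular ensuring the reduction to a one-dimensional base $U \subset \A^1$ does not destroy the bigness of the monodromy. Since Katz and Sarnak carry out precisely this reduction in the course of their applications to one-variable families of $L$-functions, I expect no genuine difficulty — the proof is essentially a citation together with the remark that restricting to a general line preserves $G_\geom$. If one prefers to avoid even the Bertini step, one can instead cite \cite[Theorem 10.2.2]{MR1659828} directly, which already exhibits such families over open subsets of $\A^1$; in that case the proof is a one-line reference.
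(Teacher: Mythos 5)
Your proposal is essentially the paper's own treatment: the paper states this result purely as a summary of \cite[Theorem 10.1.16 and Theorem 10.2.2]{MR1659828}, exactly the two references you cite. Your extra discussion of restricting to a general line in a higher-dimensional base is a harmless safety net but is not needed, since the cited results already furnish one-parameter families over open subsets of $\A^1_{\Fq}$ with full symplectic geometric monodromy.
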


\begin{cor}\label{cor:big-ell-mono}
  Let $\mc C\to U$ be a family as in Theorem \ref{thm:ks}. The
  geometric generic fiber
  $\overline C:=\mc C\times_U\Spec\overline{\Fq(t)}$ has the property
  that $\NS(\Jac_{\overline C})=\Z\Theta$.
\end{cor}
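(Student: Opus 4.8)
The goal is to prove Corollary \ref{cor:big-ell-mono}: for a family $\mc C \to U$ as in Theorem \ref{thm:ks}, the geometric generic fiber $\overline C := \mc C \times_U \Spec \overline{\Fq(t)}$ satisfies $\NS(\Jac_{\overline C}) = \Z\Theta$.

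The plan is to deduce this directly from Proposition \ref{prop:NSgeomgenfiber} applied to the curve $C$ over the field $k := \overline\Fq(t)$ (or a suitable finite extension thereof containing all $\ell$-power roots of unity). Proposition \ref{prop:NSgeomgenfiber} requires that the identity component of the Zariski closure of the image of the Galois representation $\rho_0 \colon \Gal(\overline k/k) \to \Sp(2g, \Qlbar)$ on $\H^1(C_{\overline k}, \Qlbar)$ be all of $\Sp(2g, \Qlbar)$, and then its conclusion is precisely $\NS(\Jac_{\overline C}) = \Z\Theta$. So the only real content is to translate the hypothesis of Theorem \ref{thm:ks} — that $G_\geom = \Sp(2g, \Qlbar)$ for the lisse sheaf $\R^1 f_\ast \Qlbar$ on $U$ — into the Galois-theoretic hypothesis of Proposition \ref{prop:NSgeomgenfiber}.

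First I would identify the generic point $\eta = \Spec \overline\Fq(t)$ of $U \otimes_{\Fq} \overline\Fq$ with a geometric-generic-type base point and recall the standard comparison: the monodromy representation $\rho \colon \pi_1(U \otimes_{\Fq} \overline\Fq, \bar\eta) \to \Sp(2g, \Qlbar)$ attached to $\R^1 f_\ast \Qlbar$ factors (after restriction to the decomposition group at $\eta$, i.e., through the surjection $\Gal(\overline{\overline\Fq(t)}/\overline\Fq(t)) \twoheadrightarrow \pi_1(U\otimes\overline\Fq, \bar\eta)$ coming from $\Spec \overline\Fq(t) \to U \otimes \overline\Fq$) through exactly the Galois representation $\rho_0$ on $\H^1(C_{\overline k}, \Qlbar)$, using proper smooth base change to identify the stalk $(\R^1 f_\ast \Qlbar)_{\bar\eta}$ with $\H^1(C_{\overline k}, \Qlbar)$. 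Since $\Gal(\overline k / k)$ surjects onto this geometric fundamental group, the Zariski closure of $\mathrm{im}(\rho_0)$ contains the Zariski closure of $\mathrm{im}(\rho)$, hence the identity component of the former contains $G_\geom = G(\rho) = \Sp(2g, \Qlbar)$; as $\Sp(2g,\Qlbar)$ is already connected and is the ambient group, equality follows, so $G(\rho_0) = \Sp(2g, \Qlbar)$. There is one bookkeeping point: Proposition \ref{prop:NSgeomgenfiber} also asks that $k$ contain all $\ell$-power roots of unity and that we fix an isomorphism $\Qlbar \cong \Qlbar(1)$; over $k = \overline\Fq(t)$ the field of constants is algebraically closed so this is automatic, and passing to such a $k$ (or noting $\overline\Fq \subset \overline\Fq(t)$ already supplies the roots of unity) changes neither $\overline C$ nor $\Jac_{\overline C}$.

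With $G(\rho_0) = \Sp(2g, \Qlbar)$ established, Proposition \ref{prop:NSgeomgenfiber} applies verbatim and yields $\NS(\Jac_{\overline C}) = \Z\Theta$, completing the proof. The only mild subtlety — and the one place I would be careful — is the identification of fundamental groups and base points making the factorization $\rho_0 \rightsquigarrow \rho$ precise (in particular that the geometric generic point of $U \otimes \overline\Fq$ plays the role of a geometric point computing $\pi_1$, and that the surjectivity $\Gal(\overline k/k) \twoheadrightarrow \pi_1^{\mathrm{geom}}$ holds because $\overline\Fq(t)$ is the function field of the geometrically connected variety $U \otimes \overline\Fq$). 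None of this is hard; it is the standard dictionary between lisse sheaves on a variety over a finite field and Galois representations of its geometric function field, so the corollary is essentially immediate from Proposition \ref{prop:NSgeomgenfiber} and Theorem \ref{thm:ks}.
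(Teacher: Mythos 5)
Your proposal is correct and follows essentially the same route as the paper: compose the monodromy representation with the surjection $\Gal(\overline{k}/k)\twoheadrightarrow \pi_1(U\otimes_{\Fq}\overline{\Fq})$ for $k=\overline{\Fq}(t)$ to conclude $G(\rho_0)=\Sp(2g,\Qlbar)$, then invoke Proposition \ref{prop:NSgeomgenfiber}. Your extra remarks on base points and on the $\ell$-power roots of unity (automatic since $\overline{\Fq}\subset k$) are fine and just make explicit what the paper leaves implicit.
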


\begin{proof}
Let $k = \overline{\Fq}(t)$. Theorem \ref{thm:ks} gives that
$G_\geom = G(\rho) = \Sp(2g,\Qlbar)$, and the natural surjection $G_k \cong \pi_1(\Spec k) \to \pi_1(U \otimes_{\Fq} \overline{\Fq})$,
implies that the composition map $\rho_0: G_k \to \pi_1(U \otimes_{\Fq} \overline{\Fq}) \to \Sp(2g,\Qlbar)$ also
has the property that $G(\rho_0) = \Sp(2g,\Qlbar)$.
We thus apply Proposition \ref{prop:NSgeomgenfiber} to obtain the desired result.
\end{proof}

\subsection{Big monodromy: Hodge realization}
\label{sec:big-monodromy-hodge}

We give a briefer sketch of the Hodge version of the argument, as it
seems to be more widespread in the literature. For example, 
\cite[Theorem 17.5.2]{MR2062673} and the discussion leading up to it
are a valuable source.

Let $\mathbf V$ be a local system on a connected space $B$ with
monodromy representation $\rho\colon \pi_1(B,b) \to \GL(V)$, where
$V = \mathbf{V}_b$. Recall that $\mathbf V$ is said to have
\emph{big monodromy\/} if $G(\mathbf V)$ acts irreducibly on $V_{\C}$.

Given a family of principally polarized abelian varieties $g\colon A\to B$,
the polarization defines a quotient sheaf
$\R^2g_\ast\Q=\bigwedge^2\R^1g_\ast\Q\to\underline\Q$. The kernel of
this map is a local system $\mathbf V_2(A)$. Fiberwise, it is
invariant under the symplectic group and is itself an irreducible
representation (Section \ref{sec:notation}\eqref{repn-thry}). We will say that
$g\colon A\to B$ has \emph{big monodromy for $\H^2$} if $\V_2(A)$ has big
monodromy.

\begin{lem}\label{lem:big-mono}
  Let $B$ be a smooth $\C$-scheme. If
  $f\colon X\to B$ is a family of smooth proper 
  curves with $G(\R^1f_\ast\Q)=\Sp(2g,\Q)$, then the Jacobian family
  $g\colon\Jac_{X}=\Pic^0_{X/B}\to B$ has big 
  monodromy for both $\H^1$ and $\H^2$.
\end{lem}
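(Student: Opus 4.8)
The plan is to deduce the lemma directly from the identifications of local systems and the representation theory recalled in Section \ref{sec:notation}. First I would observe that, by definition, $\mathbf V := \R^1 f_\ast \Q$ is a local system of $\Q$-vector spaces of rank $2g$ on $B$ carrying a symplectic pairing (coming from cup product on the fibers together with the fundamental class), and the hypothesis $G(\R^1 f_\ast \Q) = \Sp(2g,\Q)$ says precisely that the connected monodromy group acts on the fiber $V$ as the full symplectic group. Since the standard representation of $\Sp(2g,\Q)$ on $V$ is absolutely irreducible, $\mathbf V$ has big monodromy; this gives the $\H^1$ statement. For the Jacobian family $g\colon \Jac_X \to B$, the canonical principal polarization identifies $\R^1 g_\ast \Q$ with $\R^1 f_\ast \Q$ (the Jacobian of a curve is principally polarized with $\H^1(\Jac) = \H^1(C)$ as symplectic local systems), so $\Jac_X \to B$ also has big monodromy for $\H^1$, with the same monodromy group $\Sp(2g,\Q)$.

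For $\H^2$, the principal polarization on the Jacobian family gives the quotient map $\R^2 g_\ast \Q = \bigwedge^2 \R^1 g_\ast \Q \to \underline\Q$ whose kernel is the local system $\mathbf V_2(\Jac_X)$, and under the identification $\R^1 g_\ast \Q \cong \mathbf V$ this is exactly the kernel $V_2 \subset \bigwedge^2 V$ of the pairing map from Section \ref{sec:notation}\eqref{repn-thry}, equipped with its monodromy action. Since the monodromy acts through $\Sp(2g,\Q)$ and $V_2$ is an absolutely irreducible $\Sp(2g,\Q)$-representation by \cite[Theorem 17.5]{fultonharris}, the local system $\mathbf V_2(\Jac_X)$ has big monodromy, which is the $\H^2$ statement.

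The only points requiring any care — and the closest thing to an obstacle — are the compatibility statements: that the polarization-induced pairing on $\R^1 f_\ast \Q$ is genuinely the standard symplectic form up to the $\GL$-action (so that $G(\R^1 f_\ast \Q) = \Sp(2g,\Q)$ literally lands one in the setup of Section \ref{sec:notation}\eqref{repn-thry}), and that forming the Jacobian commutes with the relevant cohomological constructions fiberwise and in families, so that the identification $\R^1 g_\ast \Q \cong \R^1 f_\ast \Q$ is an isomorphism of polarized local systems on $B$ and not merely fiberwise. Both are standard: the first is the definition of a symplectic local system together with the fact that $G(\rho)$ is defined up to conjugacy in $\GL(V)$, and the second follows from the functoriality of the Abel–Jacobi/Albanese construction in families (cf. the discussion in \cite[Theorem 17.5.2]{MR2062673} and preceding material). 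Granting these, the argument is immediate from irreducibility of $V$ and $V_2$ as $\Sp(2g,\Q)$-representations.
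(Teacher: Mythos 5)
Your proposal is correct and follows essentially the same route as the paper: identify $\R^1 g_\ast\Q$ with $\R^1 f_\ast\Q$, hence $\R^2 g_\ast\Q$ with $\bigwedge^2\R^1 f_\ast\Q$, and conclude from the irreducibility of the standard representation and of $V_2$ under $\Sp(2g,\Q)$ as recalled in Section \ref{sec:notation}. The paper compresses all of this into two sentences, while you spell out the compatibility of polarizations in families, but the mathematical content is the same.
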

\begin{proof}
  Recall that $\R^1 g_\ast\Q=\R^1f_\ast\Q$ as local systems, so we
  can identify $\R^2 g_\ast\Q$ with $\bigwedge^2\R^1f_\ast\Q$. The rest
  follows from the definitions.
\end{proof}

\begin{cor}\label{cor:big-mono}
  Suppose $B$ is a smooth $\C$-scheme.
  If $f\colon X\to B$ is a smooth proper family of curves with
  $G(\R^1f_\ast\Q)=\Sp(2g,\Q)$, then for any very general $b\in B(\C)$, the
  N\'eron-Severi 
  group $\NS(\Jac_{X_b})$ is isomorphic to $\Z$ and generated by the theta
  divisor $\Theta$.
\end{cor}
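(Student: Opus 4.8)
The plan is to deduce Corollary \ref{cor:big-mono} from Lemma \ref{lem:big-mono} together with a standard Hodge-theoretic specialization argument, exactly parallel to how the $\ell$-adic Corollary \ref{cor:big-ell-mono} and Proposition \ref{prop:NSgeomgenfiber} fit together. Concretely, fix a family $f\colon X\to B$ of smooth proper curves over a smooth $\C$-scheme $B$ with $G(\R^1 f_\ast\Q)=\Sp(2g,\Q)$. By Lemma \ref{lem:big-mono}, the Jacobian family $g\colon \Jac_X=\Pic^0_{X/B}\to B$ has big monodromy for $\H^2$, i.e., $G(\V_2(\Jac_X))$ acts irreducibly on $\V_2(\Jac_X)_{b,\C}=V_{2,\C}$ for a base point $b$.

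The key step is then to show that for very general $b\in B(\C)$, the N\'eron--Severi group $\NS(\Jac_{X_b})$ injects (via first Chern class, modulo torsion) into the monodromy invariants $\H^2(\Jac_{X_b},\Q)^{\pi_1}$, and that these invariants are one-dimensional. For this I would invoke the theorem of the fixed part / the global invariant cycle theorem: for a smooth projective family, a Hodge class on a very general fiber extends to a flat section of $\R^2 g_\ast\Q$ over (an \'etale cover of) $B$, hence lies in the monodromy-invariant subspace $(\R^2 g_\ast\Q)^{\pi_1(B,b)}$. Here "very general" means outside a countable union of proper closed subsets — the Noether--Lefschetz loci where extra algebraic classes appear — and this is where the hypothesis on $B$ being a $\C$-scheme (so one has a classical topology and the countability argument works) is used. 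Since $\NS$ injects into Hodge classes, we get $\NS(\Jac_{X_b})\otimes\Q \hookrightarrow \H^2(\Jac_{X_b},\Q)^{\pi_1(B,b)}$.

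Now I decompose $\H^2(\Jac_{X_b},\Q)=\bigwedge^2\H^1(\Jac_{X_b},\Q)=\Q\Theta\oplus V_2$ using the polarization; the invariants split accordingly. The summand $\Q\Theta$ contributes the theta class (which is monodromy-invariant since the principal polarization is), while $V_2^{\pi_1(B,b)}=0$ because $V_2$ is an irreducible $G(\V_2(\Jac_X))$-representation by big monodromy for $\H^2$ (Lemma \ref{lem:big-mono}) and is nonzero, so it has no nonzero invariants. Hence $\H^2(\Jac_{X_b},\Q)^{\pi_1(B,b)}=\Q\Theta$, giving $\NS(\Jac_{X_b})\otimes\Q=\Q\Theta$. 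Finally, $\Theta$ is indivisible in $\NS(\Jac_{X_b})$ since it is a principal polarization (equivalently, its self-intersection is $g!$ and it has degree $g$ on the Abel--Jacobi curve), and $\NS$ of an abelian variety is torsion-free, so $\NS(\Jac_{X_b})=\Z\Theta$.

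The main obstacle — really the only nontrivial input beyond Lemma \ref{lem:big-mono} and elementary representation theory — is the passage from "Hodge class on a very general fiber" to "monodromy-invariant cohomology class," i.e., correctly invoking the global invariant cycle theorem (Deligne's theorem of the fixed part) and being careful that the family $g\colon \Jac_X\to B$ is polarized so that the theorem applies, together with the countability argument that pins down the meaning of "very general." This is precisely the content referenced in the cited discussion around \cite[Theorem 17.5.2]{MR2062673}, so I would cite that rather than reprove it. Everything else is formal.
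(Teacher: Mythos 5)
Your proposal is correct and takes essentially the same route as the paper: the paper also reduces to $\Q$-coefficients, views $\NS(\Jac_{X_b})\otimes\Q$ as a trivial Hodge substructure of $\H^2(\Jac_{X_b},\Q)$, and kills its intersection with $(\V_2)_b$ at a very general point using big monodromy, simply citing \cite[Theorem 10.20]{peterssteenbrink} for the key fact that $(\V_2)_b$ has no rational Hodge substructures there --- exactly the Hodge-locus/monodromy input you unpack by hand. The only caveat in your write-up is that a Hodge class at a very general point is a priori invariant only under a finite-index subgroup of $\pi_1(B,b)$ (flatness over an \'etale cover), not under $\pi_1$ itself; this is harmless because invariance then holds for the connected algebraic monodromy group $G(\V_2(\Jac_X))$, whose irreducible action on $(\V_2)_{b,\C}$ is what forces the $V_2$-component to vanish.
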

\begin{proof}
  As the N\'eron-Severi group for abelian varieties is torsion-free, it is enough
  to show the result after tensoring with $\Q$.
  For any point $b \in B(\C)$, the N\'eron-Severi group
  $\NS(\Jac_{X_b}) \otimes \Q$ is a trivial $\Q$-Hodge structure
  contained in the $\Q$-Hodge structure $\H^2(\Jac_{X_b},\Q)$.  (In
  fact, by the Lefschetz (1,1) theorem, it is the maximal such
  structure.)  By \cite[Theorem 10.20]{peterssteenbrink}, for a very
  general point $b$, the sub-Hodge structure
  $(\V_2)_b \subset \H^2 (\Jac_{X_b},\Q)$ has no rational Hodge
  substructures. By the exact sequence
  $$0  \to (\V_2)_b \to \H^2 (\Jac_{X_b},\Q) \to \Q \to 0,$$
  it follows that $\NS(\Jac_{X_b}) \otimes \Q$ is one-dimensional.
  \end{proof}

\begin{cor}\label{cor:ns-general-curve}
  Let $k$ be a field of characteristic $0$ and $X \to \Spec k$ be a smooth proper
  geometrically connected curve such that the image of the induced
  map $\Spec k \to \ms M_g$ is the generic point. Then we
  have that $\NS(\Jac_{X_{\widebar k}})=\Z\Theta$.
\end{cor}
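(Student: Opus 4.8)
The plan is to reduce the statement over the generic point of $\ms M_g$ in characteristic $0$ to the big-monodromy input already assembled, namely Corollary \ref{cor:big-mono}. First I would dispose of the low-genus cases: for $g=0$ the Jacobian is trivial, and for $g=1$ the Jacobian is the curve itself, an elliptic curve, whose N\'eron--Severi group is $\Z$ generated by a point (which is the $\Theta$-divisor), so there is nothing to prove; thus assume $g\geq 2$ and work with $\ms M_g$. The essential reduction is that ``the image of $\Spec k\to\ms M_g$ is the generic point'' is detected after base change to $\widebar k$ and is insensitive to replacing $k$ by a larger field: the formation of $\NS(\Jac_{X_{\widebar k}})$, together with the class $\Theta$, only depends on $X_{\widebar k}\to\Spec\widebar k$, i.e.\ on the composite $\Spec\widebar k\to\ms M_g$, which still hits the generic point since $\ms M_g$ is irreducible over $\Q$ (indeed over $\Z$, by \cite{delignemumford}) and $\widebar k\supset k$. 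So we may assume $k=\widebar k$ is algebraically closed, and in fact we may enlarge $k$ further if convenient.

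Next I would realize $X_k$ as a fiber of a suitable family over a smooth $\C$-scheme. Choose an embedding of a finitely generated subfield $k_0\subset k$ over which $X$ is defined (containing the ``coefficients'' of $X$) into $\C$; since $X_{k_0}\to\Spec k_0$ maps to the generic point of $\ms M_g$, after possibly enlarging $k_0$ we can spread $X_{k_0}$ out to a smooth proper family $f\colon \mc X\to B$ over a smooth irreducible finite-type $\C$-scheme $B$ whose classifying map $B\to \ms M_{g,\C}$ is dominant, with $X_k$ (equivalently $X_\C$ after our chosen embedding) the fiber over a point $b_0\in B(\C)$ lying over the generic point of $\ms M_{g,\C}$. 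Because $\ms M_{g,\C}$ has big geometric monodromy on $\H^1$ of the universal curve --- this is exactly Theorem \ref{thm:ks} in the $\ell$-adic guise, but over $\C$ one can simply invoke that the image of $\pi_1(\ms M_{g,\C})$ on $\H^1$ is Zariski-dense in $\Sp(2g)$, e.g.\ via Dehn twists / the classical surjectivity onto $\Sp(2g,\Z)$ --- the pulled-back family $f$ satisfies $G(\R^1 f_\ast\Q)=\Sp(2g,\Q)$, as $B\to\ms M_{g,\C}$ is dominant and dominant maps of irreducible varieties are surjective on $\pi_1$ up to finite index, which does not change $G(-)$.

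Now Corollary \ref{cor:big-mono} applies to $f\colon\mc X\to B$: for very general $b\in B(\C)$, $\NS(\Jac_{\mc X_b})=\Z\Theta$. The final point is to upgrade ``very general $b$'' to ``our specific $b_0$ lying over the generic point of $\ms M_g$.'' This is where I expect the only real subtlety to lie. The set of $b\in B(\C)$ with $\NS(\Jac_{\mc X_b})$ strictly larger than $\Z\Theta$ is a countable union of proper closed subsets $Z_i\subsetneq B$ (the Noether--Lefschetz loci, one for each possible extra algebraic class); each $Z_i$ is a proper closed subvariety defined over $\widebar\Q$, so its image in $\ms M_{g,\C}$ is contained in a proper closed subvariety, hence does not contain the generic point. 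Since $b_0$ maps to the generic point of $\ms M_{g,\C}$, it maps outside every $Z_i$, so $\NS(\Jac_{\mc X_{b_0}})=\Z\Theta$; as $\mc X_{b_0}\cong X_\C$ and $\NS$ of an abelian variety is unchanged under extension of algebraically closed fields (Observation \ref{obs:ns}), we conclude $\NS(\Jac_{X_{\widebar k}})=\Z\Theta$. Alternatively, and perhaps more cleanly, one can argue directly at the level of $\ms M_g$: take $B$ to be (an \'etale chart of) $\ms M_{g,\C}$ itself and $b_0$ its generic point viewed as a $\C$-point after choosing an embedding $\C(\ms M_g)\inj\C$ --- then ``very general in $B(\C)$'' literally includes the image of the generic point, and the same Noether--Lefschetz countable-union argument gives the claim. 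Either way, the monodromy computation has already been done in Corollary \ref{cor:big-mono}, and the content here is purely the specialization/generization bookkeeping.
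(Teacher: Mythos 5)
Your overall route is the same as the paper's: reduce to an algebraically closed base field, realize the curve inside a smooth proper family over a smooth $\C$-scheme dominating $\ms M_{g,\C}$, invoke the classical surjectivity of the mapping class group onto $\Sp(2g,\Z)$ to get big monodromy, and apply Corollary \ref{cor:big-mono} to conclude $\NS=\Z\Theta$ at a very general complex point. Where you diverge is the final passage from ``very general point'' to ``the point corresponding to $X$,'' and as written that step has a genuine slip. A $\C$-point of $B$ cannot lie over the generic point of $\ms M_{g,\C}$ (the residue field of that generic point has transcendence degree $3g-3$ over $\C$); what is true is that your $b_0$ lies over the generic point of $\ms M_{g,\Q}$ (equivalently, avoids every proper closed substack defined over $\widebar\Q$). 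Moreover the inference ``$Z_i\subsetneq B$ proper closed, hence its image in $\ms M_{g,\C}$ is contained in a proper closed subvariety'' does not follow: when $\dim B>3g-3$ a proper closed subset of $B$ can perfectly well dominate $\ms M_g$. Finally, the assertion that the Noether--Lefschetz loci $Z_i$ are defined over $\widebar\Q$ is exactly the nontrivial content of this route (it needs the family to be spread out over $\widebar\Q$ and a spreading-out/Galois-stability argument, or Cattani--Deligne--Kaplan plus descent), and you state it without justification; Corollary \ref{cor:big-mono} by itself only gives a countable union of proper closed subsets over $\C$, which could a priori contain your particular $b_0$. The argument is repairable: spread out over a $\Q$-model $B_0$ with function field $k_0$, note that $b_0$ is by construction the $\C$-point dominating the generic point of $B_0$, and run the avoidance directly in $B$ against $\widebar\Q$-defined loci, never passing through $\ms M_{g,\C}$.

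The paper sidesteps all of this bookkeeping with a one-line mechanism that you could also use to close your gap immediately: N\'eron--Severi groups can only grow under specialization. The geometric generic fiber of $\ms M_g$ (which is exactly $X_{\widebar k}$, by the hypothesis on the image of $\Spec k$) specializes to a very general complex curve $C$, so $\NS(\Jac_{X_{\widebar k}})$ injects into $\NS(\Jac_C)=\Z\Theta$ compatibly with $\Theta\mapsto\Theta$; since it contains $\Theta$, it equals $\Z\Theta$. This replaces the Noether--Lefschetz/field-of-definition analysis (the ``very general equals geometric generic'' lemma, which is true but is precisely the part you left unproved) by the elementary injectivity of the specialization map on $\NS$.
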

\begin{proof}
  The statement is invariant under extension of $k$. It is well known
  that for the universal curve over $\ms M_g$ the geometric monodromy
  group attached to
  $\H^1$ is $\Sp(2g)$; see, e.g., \cite[\S 5]{delignemumford} or \cite[Theorem
  6.4]{farbmargalit}. Thus, if $m$ is a very general complex point of
  $\ms M_g$ corresponding to a curve $C$, then by Corollary
  \ref{cor:big-mono} we have that $\NS(\Jac_C)=\Z\Theta$. Since
  N\'eron-Severi groups can only grow under specialization, it follows
  that the geometric generic point must have the same property, giving
  the desired result.
\end{proof}

\subsection{Appeal to example} \label{sec:zarhin}

Since N\'eron-Severi rank can only increase under specialization, another
proof of Proposition \ref{prop:univ-jac} follows from showing that
there exists a single curve of every genus at least $2$, in any characteristic,
whose Jacobian has N\'eron-Severi rank exactly $1$. Results of Zarhin 
\cite{MR1748293, MR2131907} on endomorphism rings of hyperelliptic
Jacobians imply that many such curves exist over most characteristics; 
in fact, Zarhin shows that for any hyperelliptic curve $C: y^2 = f(x)$, where $f(x)$ is an 
irreducible separable degree $n \geq 5$ polynomial with Galois group 
either $S_n$ or $A_n$, in any characteristic $p > 3$,
the endomorphism ring over the algebraic closure is $\Z$, implying
that $\NS(\Jac_C)$ also has rank $1$. These results themselves are quite subtle
and rely on very different techniques than those sketched in this paper.

\subsection{General deformations of smoothable curves}\label{sec:neron-severi-group}

In this section we describe how to put any smoothable curve in a
family with the generic curve. This will be useful for studying the
splitting of Brauer classes, as we explain in Section
\ref{lem:obs}. (We will only apply this to smooth curves, but we
suspect that the full statement for smoothable curves may be useful in
the future, so we record it here.)

\begin{cor}\label{cor:family}
  Suppose $C$ is a smoothable proper geometrically connected curve over a field
  $K$ such that $\Ext^2(L_{C/K},\ms O_C)=0$ (for example, a proper nodal
  curve). Let $W$ be a complete dvr with residue field $K$. There is a
  proper flat family $\mc C\to\Spec W$ such that
  \begin{enumerate}
    \item $\mc C\tensor_W K\cong C$, and
    \item if $\eta\to\Spec W$ is a geometric generic point, then we have
    $\NS(\Jac_{\mc C_\eta})=\Z\Theta$, where $\Theta$ is the usual theta-divisor
    class associated to $\mc C_\eta$.
  \end{enumerate}
\end{cor}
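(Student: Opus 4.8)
The plan is to build the family $\mc C\to\Spec W$ as a generic deformation of $C$ over the complete dvr $W$, and then argue that the geometric generic fiber acquires minimal Néron–Severi group because it specializes to (the geometric generic fiber of) a Katz–Sarnak type family after a suitable base change. First I would invoke the deformation theory of the curve $C$: the hypothesis $\Ext^2(L_{C/K},\ms O_C)=0$ guarantees that the deformation functor of $C$ over $W$-algebras is unobstructed, so there is a formally smooth formal deformation over $W[[t_1,\dots,t_n]]$ for a suitable $n$ (the dimension of $\Ext^1(L_{C/K},\ms O_C)$). By Grothendieck's existence theorem / algebraization, this formal family algebraizes to a proper flat family over $\Spec W[[t_1,\dots,t_n]]$ (after completing, using that we only need the family over the formal scheme, or alternatively using Artin approximation to get an algebraic family over an étale neighborhood). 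Since $C$ is smoothable, after possibly restricting to an open locus the generic fiber of this family is a smooth proper geometrically connected curve of some genus $g$; if $g\le 1$ the conclusion about $\NS(\Jac)=\Z\Theta$ is automatic (the Jacobian is a point or an elliptic curve/genus-one Jacobian, for which $\NS$ is generated by $\Theta$), so we may assume $g\ge 2$.

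The main point is then the Néron–Severi computation for the geometric generic fiber. Here I would combine Proposition \ref{prop:univ-jac} with the semicontinuity of Néron–Severi rank under specialization. Concretely: the smooth generic fiber over $W[[t_1,\dots,t_n]]$, restricted to an appropriate open subscheme, defines a map to $\ms M_g$; I claim this map dominates the fiber of $\ms M_g\to\Spec\Z$ over the residue characteristic, or at least hits its generic point after base change. The reason is that the deformation space of $C$ inside $\ms M_g$ is open (a smooth curve deforms to all nearby curves), so the image contains a nonempty open subset of $\ms M_g\otimes k(\text{residue char})$ and hence, by irreducibility of the geometric fibers of $\ms M_g\to\Spec\Z$ (cited from \cite{delignemumford}), contains its generic point. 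Pulling back a curve over the generic point of this fiber — equivalently, spreading out and taking a geometric generic fiber of the family — and applying Proposition \ref{prop:univ-jac} gives a single geometric curve $\overline C_0$ in the family with $\NS(\Jac_{\overline C_0})=\Z\Theta$. Since the geometric generic fiber $\mc C_\eta$ of $\mc C\to\Spec W$ specializes to $\overline C_0$ (choosing the parameters $t_i$ so that the special point of $\Spec W$ maps into the right locus, and noting $\NS$ rank is lower semicontinuous while $\Z\Theta$ is already the minimum possible given $\Theta$ is a principal polarization hence indivisible), we get $\NS(\Jac_{\mc C_\eta})=\Z\Theta$ as well. Finally, I would arrange condition (1) by taking the $W$-family to be the restriction of the $W[[t_1,\dots,t_n]]$-family along the origin section $t_i=0$, which recovers $C$ on the closed fiber by construction, after first choosing the path in parameter space appropriately — more carefully, one picks a $W$-point of (an open of) the parameter space whose closed point is the origin (giving $\mc C\otimes_W K\cong C$) and whose generic point maps to the generic point of $\ms M_g\otimes k$, which is possible because $W$ is a complete dvr with infinite residue field, or by passing to a ramified extension of $W$ if necessary (allowed since $\eta$ is only a geometric generic point).

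The step I expect to be the main obstacle is the simultaneous fulfillment of conditions (1) and (2): one must choose the arc $\Spec W\to \Spec W[[t_1,\dots,t_n]]$ (or into the algebraic parameter space) so that the closed fiber is exactly $C$ while the generic fiber is generic enough in moduli. Getting the closed fiber right is forced (it must be the origin of the deformation space), so genericity of the generic fiber must be extracted from the behavior of a single tangent arc; this requires knowing that the deformation map to $\ms M_g$ is dominant onto the characteristic-$p$ fiber and that a formal/analytic arc through a point can be chosen to meet the generic point — which is where one uses that $W$ is complete with (necessarily) infinite residue field, or simply enlarges $W$. A secondary technical point is the algebraization of the formal deformation: since the curves are proper but the base is not, one should phrase the construction using formal GAGA for the formal scheme $\Spf W[[t_1,\dots,t_n]]$ together with the line bundle needed for projectivity (available since $C$ is a curve, hence projective, and the relative dualizing sheaf or a relatively ample bundle deforms), or appeal to Artin's algebraization theorem for the moduli functor; either way this is standard and not the conceptual heart of the argument.
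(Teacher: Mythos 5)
Your overall architecture follows the paper's: use $\Ext^2(L_{C/K},\ms O_C)=0$ to get an unobstructed (universal) formal deformation over $W[\![x_1,\dots,x_n]\!]$, algebraize via Artin, note that the induced map from the smooth locus of the family to $\ms M_g$ is dominant by openness of versality, and then choose a $W$-arc through the distinguished point $x$ whose generic point is generic in moduli so that Proposition \ref{prop:univ-jac} applies. But the step you yourself flag as ``the main obstacle'' is the actual content of the proof, and you do not resolve it: you assert that a $W$-point of the parameter space can be chosen with closed point $x$ and with generic point mapping to the generic point of the moduli fiber ``because $W$ is complete with infinite residue field, or by passing to a ramified extension of $W$,'' and neither of these is an argument. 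The paper's proof of precisely this step is a cardinality argument: the relevant fiber is $\ms M_{g,\F}$ with $\F$ the \emph{prime field of the fraction field} $Q$ of $W$ (not the residue characteristic fiber --- if $W$ has mixed characteristic the generic fiber of the arc lives in characteristic $0$); since $\F$ is countable, $\ms M_{g,\F}$ has only countably many closed substacks, while the $W$-points of $\widehat{\ms O}_{X,x}$ form a polycylinder over the uncountable field $Q$; dominance of the map to moduli shows no proper closed substack pulls back to contain the whole polycylinder, and an induction on $n$ produces a $W$-point avoiding all countably many pullbacks at once. Without this (or a substitute), the generic point of your arc could land anywhere in moduli. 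Note also that enlarging $W$ is not permitted: the statement quantifies over a given complete dvr $W$ with residue field $K$, and in the application $W$ is fixed because the Brauer class is lifted to $\Br(W)$.

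Relatedly, your N\'eron--Severi step runs the specialization in the wrong direction. For a smooth proper family, the N\'eron--Severi group of the geometric generic fiber injects into that of any geometric special fiber, so ranks can only grow under specialization; hence knowing that the curve $\overline C_0$ over the generic point of moduli has $\NS(\Jac_{\overline C_0})=\Z\Theta$ says nothing about an arbitrary member of your family, and the claim that ``$\mc C_\eta$ specializes to $\overline C_0$'' can only hold if the generic point of your arc already maps to the generic point of $\ms M_{g,\F}$ --- which is exactly the unproved selection above, so as written this part is circular (and the appeal to ``lower semicontinuity'' of the rank is backwards). The clean route, as in the paper, is to arrange the arc so that its generic point hits the generic point of moduli and then apply Proposition \ref{prop:univ-jac} to $\mc C_\eta$ directly.
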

\begin{proof}
  Write $Q$ for the fraction field of $W$. 
  The assumptions on $C$ ensure that the universal formal deformation of $C$ is
  represented by a proper morphism of schemes $\mf C\to\Spec
  W[\![x_1,\ldots,x_n]\!]$ with a smooth fiber. Since the stack of
  proper curves is an Artin stack locally of finite presentation,
  Artin's algebraization theorem
  tells us that there is a pair $(X,x)$ with $X$ a smooth $W$-scheme
  and $x\in X(K)$ and a family $\mc C\to X$ such that the restriction
  of $\mc C\to X$ to $\widehat{\ms O}_{X,x}$ is isomorphic to the universal
  deformation. Let $X^\circ$ denote the locus over which $\mc C$ is
  smooth and let $\F$ be the prime field of $Q$. 
  There is an induced map $\mu\colon X^\circ\to\ms M_{g,\F}$, and by the openness 
  of versality we know that this map is dominant.

  By Proposition \ref{prop:univ-jac}, it suffices to show that
  there is a map $\Spec W\to X$ whose closed point lands at $x$ and
  whose generic point maps to the generic point of $\ms M_{g,\F}$.
  Since $\F$ is countable, $\ms M_{g,\F}$ has only countably many
  closed substacks. Consider the polycylinder
  $B^n=\{(a_0,\ldots,a_n)| a_i\in Q, |a_i|<1\}$ parametrizing all
  $W$-points of $\widehat{\ms O}_{X,x}$. Since $\mu$ is dominant, no
  closed subscheme of $\ms M_{g,\F}$ contains all of $B^n$. Since $Q$ is
  uncountable it follows (e.g., by induction on $n$) that there is a
  point of $B^n$ not in the pullback of any closed substack of $\ms
  M_{g,\F}$. This gives a $W$-point of $X$ with the desired properties.
\end{proof}

\section{Proof of Theorem \ref{thm:main}}

\begin{lem}\label{lem:obs}
  Suppose $C$ is a smooth proper geometrically connected curve of
  genus $g$ over a field $K$ and $C\to 
  X$ is an Albanese morphism. Suppose further that the morphism $\NS(X_{\widebar
  K})\to\NS(C_{\widebar K})$ is injective with image $g\NS(C_{\widebar K})$.
  If $\alpha\in\Br(K)$ is a class of order prime to $g$ such that
  $\alpha_C=0$, then $\alpha_X=0$.
\end{lem}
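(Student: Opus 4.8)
The strategy is to compare the obstruction to splitting $\alpha$ on $C$ with the obstruction to splitting $\alpha$ on $X$ via the Leray/Hochschild--Serre machinery for the Brauer group, using the Albanese morphism $f\colon C\to X$ and the hypothesis on N\'eron--Severi groups. Recall that for a smooth proper geometrically connected variety $Y$ over $K$, the vanishing of $\alpha_Y$ can be detected through the exact sequence coming from $\H^1_{\et}(Y_{\widebar K},\G_m)=\Pic^0_{Y/K}(\widebar K)$ and $\NS_{Y/K}(\widebar K)$: concretely, a class $\alpha\in\Br(K)=\H^2(G_K,\widebar K^\times)$ pulls back to zero in $\Br(Y)$ precisely when its image under the boundary map arising from the exact sequence of $G_K$-modules
\begin{equation*}
0\to\widebar K^\times\to \widebar K(Y)^\times\to \Div(Y_{\widebar K})\to \ldots
\end{equation*}
dies, and the relevant obstruction lives naturally in a subquotient controlled by $\H^1(G_K,\Pic(Y_{\widebar K}))$ together with the pairing of $\alpha$ against $\NS$. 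The cleanest way to package this is: $\alpha_Y=0$ if and only if $\alpha$ lies in the image of $\Br(K)\to\Br(K)$ "twisted by a zero-cycle of degree $1$ worth of $\Pic$-data," or equivalently, if and only if there is a $G_K$-equivariant splitting-type section realized on $\Pic_{Y/K}$. The point of the N\'eron--Severi hypothesis is to make the comparison map on these obstruction groups between $C$ and $X$ transparent.

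\textbf{Key steps.} First, I would set up the obstruction class: for the curve $C$, the condition $\alpha_C=0$ is equivalent to the existence of a $K$-rational point on the Brauer--Severi-type $\Pic$-torsor, and more precisely $\alpha_C=0$ forces $\alpha$ to be annihilated after pairing with $\NS(C_{\widebar K})=\Z$; since the Albanese map has degree-$g$ pullback on N\'eron--Severi (by the hypothesis $\NS(X_{\widebar K})\xrightarrow{\sim} g\NS(C_{\widebar K})$, matching the computation that $f^*\Theta$ has degree $g$ on $C$ as in Proposition \ref{prop:NSgeomgenfiber}), the obstruction for $X$ is, up to the identification $\NS(X_{\widebar K})\cong\Z$, exactly $g$ times a generator paired against $\alpha$. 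Second, I would use that $f\colon C\to X$ induces compatible maps on the Hochschild--Serre spectral sequences $\H^p(G_K,\H^q_{\et}(\,\cdot\,_{\widebar K},\G_m))\Rightarrow \H^{p+q}_{\et}(\,\cdot\,,\G_m)$, so that $\alpha_X=0$ can be checked on the $E_2$-terms: the relevant differential/edge map for $X$ factors through that for $C$ after multiplication by $g$ on the N\'eron--Severi part. Third — and this is where the order hypothesis enters — since $\alpha$ has order $n$ prime to $g$, multiplication by $g$ is invertible on the $n$-torsion subgroup where $\alpha$'s obstruction lives, so "$g\cdot(\text{obstruction for }X)=0$" (which follows from $\alpha_C=0$) already forces the obstruction for $X$ itself to vanish. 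One then concludes $\alpha_X=0$.

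\textbf{Main obstacle.} The technical heart is verifying that the obstruction to splitting a Brauer class on a smooth proper geometrically connected variety is genuinely controlled by the pair $(\H^1(G_K,\Pic^0),\ \alpha\text{ paired with }\NS)$ in a way that is functorial for the Albanese morphism and compatible with the N\'eron--Severi pushforward being multiplication by $g$. In particular I would need to be careful that the contribution of $\Pic^0$ (the $\H^1(G_K,\Jac)$ and $\H^1(G_K,\mathrm{Alb})$ parts) either matches up under $f$ or is irrelevant to the vanishing of $\alpha_X$ given $\alpha_C=0$; since $f^*\colon \Pic^0_{X/K}\to\Jac_C$ is an isomorphism of abelian varieties (the Albanese is the dual-dual, and pullback identifies $\Pic^0$ of the Albanese with the Jacobian), the $\Pic^0$-parts are literally identified, and the only discrepancy between $C$ and $X$ is precisely the multiplication-by-$g$ on the rank-one N\'eron--Severi lattice. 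Once that functoriality is pinned down, the prime-to-$g$ hypothesis does the rest formally. I expect the write-up to hinge on choosing the right cohomological formulation (e.g.\ via the exact sequence $0\to\Pic^0_{Y/K}\to\Pic_{Y/K}\to\NS_{Y/K}\to 0$ of $G_K$-modules and the induced long exact sequence, together with the identification of $\Br(Y)/\Br(K)$'s relevant piece), rather than on any hard geometry.
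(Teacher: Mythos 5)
Your proposal is correct and follows essentially the same route as the paper: both identify $\Pic^0_{X/K}$ with $\Jac_C$ via pullback along the Albanese map, use the exact sequences $0\to\Pic^0\to\Pic\to\NS\to 0$ so that the only discrepancy between $C$ and $X$ is multiplication by $g$ on the rank-one N\'eron--Severi part, and finish with the prime-to-$g$ torsion argument applied to the obstruction (edge) map for sections of the Picard scheme. The functoriality you flag as the main obstacle is exactly what the paper pins down with the snake lemma, which yields $0\to\Pic_{X/K}\to\Pic_{C/K}\to\Z/g\Z\to 0$, so that $gs$ lifts to a section of $\Pic_{X/K}(K)$ whose obstruction is $g\alpha$, giving $g\alpha_X=0$ and hence $\alpha_X=0$.
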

\begin{proof}
  By the Leray spectral sequence, we have $\alpha_C=0$ if and only if $\alpha$ is the
  obstruction class for a global section of $\Pic_{C/K}$. Consider the diagram
  $$
  \begin{tikzcd}
    0\ar[r] & \Pic^0_{X/K}\ar[r]\ar[d, "\cong"] & \Pic_{X/K}\ar[r]\ar[d] & 
    \NS_{X/K}\ar[r]\ar[d] & 0 \\
    0\ar[r] & \Pic^0_{C/K}\ar[r] & \Pic_{C/K}\ar[r] & \NS_{C/K}\ar[r] & 0
  \end{tikzcd}
$$
of sheaves on $\Spec K$. The Snake Lemma applied to the diagram
yields an exact sequence of sheaves
   $$0\to\Pic_{X/K}\to\Pic_{C/K}\to\Z/g\Z\to 0.$$ 
Thus, the map on global
   sections $$\Pic_{X/K}(K)\to\Pic_{C/K}(K)$$ is injective with cokernel
   annihilated by $g$. Since $g$ is prime to the order of $\alpha$ and the
   obstruction map for sections of the Picard scheme is a group homomorphism, we
   see that $\alpha_X=0$ if and only if $g\alpha_X=0$. If
   $s\in\Pic_{C/K}(K)$ is a section with obstruction $\alpha$, then
   the preimage of $gs$ in $\Pic_{X/K}(K)$ is a section with obstruction $g\alpha$,
   and the desired result follows.
\end{proof}

\subsection*{Proof of Theorem \ref{thm:main}}
  Suppose $C$ is a smooth proper geometrically connected
  curve over $K$ of genus $g\geq 1$ such that $\alpha_C=0$. If
  $g=1$, the desired conclusion holds because $\Alb_C=C$. Thus, we
  assume $g\geq 2$. We will now show that to prove the Theorem it
  suffices to prove that $\Alb_C$ splits $\alpha$ under the additional
  assumption that $g$ is relatively prime to $\ind(\alpha)$ (and
  therefore relatively prime to the order of $\alpha$).
  
  Observe that $\ind(\alpha)$ divides $2g-2 = 2(g-1)$, since the
  canonical divisor of $C$ has degree $2g-2$. As a result, all odd
  divisors of $\ind(\alpha)$ visibly cannot divide $g$, and if
  $4|\ind(\alpha)$, then $g$ is odd. Hence, if
  $\ind(\alpha)\not\equiv 2\pmod 4$, we find that $g$ is relatively
  prime to $\ind(\alpha)$.  If $\ind(\alpha)\equiv 2\pmod 4$, then we can write
  $\alpha=\alpha_2+\alpha'$ with $\alpha_2$ of index $2$ and $\alpha'$
  of odd order. Since $\alpha_2$ is split by a conic (namely, the
  Brauer-Severi variety of the associated division algebra) and any conic
  admits a cover by a genus $1$ curve (namely, the branched cover over
  a general divisor of degree $4$), we see upon taking products
  that it suffices to prove that $\Alb_C$ splits $\alpha'$ (which has
  odd index relatively prime to $g$).

  Let $W$ be a complete dvr with residue field $K$. By \cite[Corollary
  6.2]{MR244269}, since $W$ is a complete dvr, there is a
  unique Brauer class $\widetilde{\alpha}\in\Br(W)$ lifting $\alpha$;
  the lifted class has the same period and index as $\alpha$. By
  Corollary \ref{cor:family}, there is a family $\ms C\to\Spec W$ such
  that the generic fiber $\ms C_\eta$ of $\ms C$ satisfies the
  conditions of Lemma \ref{lem:obs}.  Since $\alpha_C=0$, the
  deformation theory of invertible twisted sheaves as in
  \cite{MR2388554} (or \cite[Theorem 3.2]{tateymctateface}, which is
  recorded without proof) tells us that
  $\widetilde\alpha_{\ms C_\eta}=0$. By Lemma \ref{lem:obs}, we have
  that $\widetilde\alpha_{\Alb_{\ms C_\eta}}=0$.  Since
  $\Alb_{\ms C/W}$ is regular, it follows (for example,
  \cite[Corollary 1.10]{MR244270}) that
  $\widetilde\alpha_{\Alb_{\ms C/W}}=0$. Thus, $\alpha_{\Alb_C}=0$ by
  specialization.\footnote{Instead of using a very general $W$-point
    of the universal deformation ring of $C$ over $K$, we could just
    use the universal deformation directly. Our approach avoids
    enlarging the fraction field of $W$ at the expense of a small
    amount of extra work.}

\subsection*{Antieau and Auel's proof of Theorem \ref{thm:main}}
A different proof of Theorem \ref{thm:main}, due to Benjamin Antieau
and Asher Auel \cite{auelantieau-private}, uses results on the stable birational
geometry of symmetric powers of Brauer--Severi varieties to 
show that an appropriate symmetric power of a curve splitting a 
Brauer class also splits the class. Here is a sketch of their proof; more
details may appear elsewhere in the future. 

Suppose $C$ is a smooth proper geometrically connected curve over $K$ of genus
$g \geq 1$ such that $\alpha_C = 0$.  As in the first proof, we may reduce to
showing that $\Alb_C$ splits $\alpha$ under the assumptions that $\alpha$ is
non-trivial and that $g$ is relatively prime to $\ind(\alpha)$.

The image of $C$ in a Brauer-Severi variety $V$ associated to $\alpha$ cannot be
a point, so the image of the induced map $\Sym^{2g-1} C \to \Sym^{2g-1} V$
intersects the smooth locus of $\Sym^{2g-1} V$. By \cite[Theorem 1
(4)]{kollar-symSeveriBrauer} (see also \cite{MR2090670}), the space
$\Sym^{2g-1} V$ is stably birational to $\Sym^m V$, where
$m = \gcd(2g-1, \ind(\alpha))$; here, we have $m=1$ since $\ind(\alpha)$ divides
$2g-2$. Thus, the smooth locus $U$ of $\Sym^{2g-1} V$ is stably birational to
$V$ and splits $\alpha$, which implies that $\Sym^{2g-1} C$ also splits
$\alpha$.

By the Riemann-Roch theorem, we have that
$\sigma \colon \Sym^{2g-1} C \to \Pic^{2g-1}_{C/K} \cong \Alb_C$ is a
Brauer--Severi scheme of relative dimension $g-1$. Since
$\sigma^\ast\alpha_{\Alb_C}=0$, the class $\alpha_{\Alb_C}$ is $g$-torsion. But
$\alpha_{\Alb_C}$ is also killed by $\ind(\alpha)$, so the assumption that $g$
is relatively prime to $\ind(\alpha)$ implies that $\alpha_{\Alb_C}=0$, as
desired.

\section{The conditions of Theorem \ref{thm:main} are
  necessary}\label{sec:counterex}

In this section, we show that there are many examples of Brauer classes
$\alpha$ with index congruent to $2$ modulo $4$
that split on a curve $C$ but not on $\Alb_C$. These examples are
easily constructed over local fields (and hence over many finitely
generated fields, by standard approximation techniques), and we
suspect one could also make similar examples over number fields.

Given a smooth proper geometrically connected curve $C$ over a field
$K$, recall that the \emph{index\/} of $C$ is the smallest degree of a
divisor on $C$, and the \emph{period\/} of $C$ is the smallest degree
of a divisor class. (Equivalently, the period of $C$ is the order of
the Albanese variety $\Alb_C=\Pic^1_{C/K}$ in
$\H^1(\Spec K, \Jac_C)$.)

\begin{lem}\label{lem:stupid-die}
  Suppose $C$ is a curve over a field $K$ and $\alpha\in\Br(K)$ is a class of
  order $2$. If $\alpha_{\Pic^1_C}=0$, then $\alpha_{\Pic^m_C}=0$ for any odd
  number $m$.
\end{lem}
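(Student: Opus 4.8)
The plan is to compare $\Pic^1_C$ and $\Pic^m_C$ by a finite flat ``multiplication-by-$m$'' morphism and then to use the transfer (corestriction) map on Brauer groups, whose composition with pullback is multiplication by the degree: since that degree will be odd and $\alpha$ has order $2$, the composition acts as the identity on the class at issue.

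First I would set up the morphism. The rule $L\mapsto L^{\otimes m}$ defines a morphism of $K$-schemes $\pi\colon\Pic^1_{C/K}\to\Pic^m_{C/K}$, where each $\Pic^d_{C/K}$ is a smooth projective $K$-scheme, namely a torsor under the Jacobian $J:=\Pic^0_{C/K}$. The morphism $\pi$ is equivariant over the isogeny $[m]\colon J\to J$, and after base change to $\overline K$ (and a choice of compatible base points) it is identified with $[m]_J$; hence $\pi$ is finite locally free and surjective of constant degree $m^{2g}$, where $g$ is the genus of $C$. (We may assume $m\geq 1$, since $L\mapsto L^{\vee}$ gives a $K$-isomorphism $\Pic^{-m}_{C/K}\cong\Pic^m_{C/K}$.) In particular, as $m$ is odd, $\deg\pi=m^{2g}$ is odd.

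Next I would invoke the transfer. For the finite locally free morphism $\pi$ there is a transfer homomorphism $\pi_\ast\colon\Br(\Pic^1_C)\to\Br(\Pic^m_C)$ with $\pi_\ast\circ\pi^\ast=(\deg\pi)\cdot\id=m^{2g}\cdot\id$ (equivalently, since $\Pic^m_C$ is regular one has $\Br(\Pic^m_C)\hookrightarrow\Br(K(\Pic^m_C))$, and one may instead use corestriction for the degree-$m^{2g}$ extension $K(\Pic^m_C)\hookrightarrow K(\Pic^1_C)$). By functoriality of pullback from $\Br(K)$ we have $\pi^\ast(\alpha_{\Pic^m_C})=\alpha_{\Pic^1_C}$, which vanishes by hypothesis; applying $\pi_\ast$ gives $m^{2g}\,\alpha_{\Pic^m_C}=0$ in $\Br(\Pic^m_C)$. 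On the other hand $\alpha$ has order $2$, so $2\,\alpha_{\Pic^m_C}=0$. Since $\gcd(m^{2g},2)=1$, a B\'ezout combination of these two identities yields $\alpha_{\Pic^m_C}=0$, as desired.

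There is no serious obstacle here; the one subtlety is that one must work with finite locally free (rather than finite \'etale) morphisms, since $[m]_J$ need not be \'etale when $\ch K\mid m$, so that the argument is uniform in all characteristics --- the transfer on $\H^2(-,\G_m)$ exists for any finite locally free morphism and still satisfies $\pi_\ast\pi^\ast=\deg\pi$.
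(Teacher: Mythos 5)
Your proposal is correct and follows essentially the same route as the paper: both use the $m$-th power map $\Pic^1_C\to\Pic^m_C$, a form of $[m]$ on the Jacobian that is finite flat of odd degree $m^{2g}$, and then kill the $2$-torsion class by a restriction--corestriction (transfer) argument, which the paper delegates to a cited reference and you spell out directly.
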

\begin{proof}
  The $m$th power map on the Picard stack descends to a morphism
  $$\lambda_m\colon \Pic^1_C\to\Pic^m_C$$
  that is an \'etale form of the multiplication by $m$ on $\Jac_C$. In
  particular, $\lambda_m$ is finite flat of degree $m^{2g}$, where $g$
  is the genus of $C$. By assumption, the class $\alpha_{\Pic^m_C}$
  vanishes upon pullback along the morphism $\lambda_m$ of odd
  degree. By standard calculations in Galois cohomology (see, for
  example, \cite[Proposition 4.1.1.1]{MR2388554}), this implies that
  $\alpha_{\Pic^m_C}=0$, as desired.
\end{proof}

\begin{prop}\label{prop:counterex}
  Let $m$ be an odd positive integer and suppose $C$ is a smooth proper
  geometrically connected curve over a local
  field $K$ of index $2m$, period $m$, and genus $m+1$. Then the unique
  non-zero Brauer class $q\in\Br(K)[2]$ is killed by $C$ but not by $\Alb_C$.
  Thus, there are Brauer classes $\alpha$ of all even indices  
  dividing $2m$ that are killed by $C$ but not by $\Alb_C$.
\end{prop}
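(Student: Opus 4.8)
The plan is to prove the two assertions about $q$ separately and then deduce the final sentence, using only the low-degree Leray sequence for $\mathbb{G}_m$ on $C\to\Spec K$, the elementary relation between the period and index of $C$, and Lemma~\ref{lem:stupid-die}. Throughout I identify $\Br(K)\cong\Q/\Z$, so $q=\tfrac12$.

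First I would show $q_{\Alb_C}\neq 0$. Since $\per(C)=m$, the $\Jac_C$-torsor $\Pic^m_{C/K}$ has class $m\cdot[\Pic^1_{C/K}]=0$ in $\H^1(\Spec K,\Jac_C)$, so it has a $K$-rational point $s_0$. Pulling $q$ back along $\Pic^m_{C/K}\to\Spec K$ and restricting along $s_0$ recovers $q\neq 0$, so $q_{\Pic^m_C}\neq 0$ in $\Br(\Pic^m_{C/K})$. Because $m$ is odd, the contrapositive of Lemma~\ref{lem:stupid-die} yields $q_{\Alb_C}=q_{\Pic^1_C}\neq 0$.

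Next, $q_C=0$. The low-degree terms of the Leray spectral sequence give an exact sequence $0\to\Pic(C)\to\Pic_{C/K}(K)\xrightarrow{\ \partial\ }\Br(K)\to\Br(C)$, so $\ker(\Br(K)\to\Br(C))=\operatorname{im}(\partial)$. Take $s_0\in\Pic^m_{C/K}(K)$ as above. If $\partial(s_0)=0$ then $s_0$ lifts to a genuine line bundle of degree $m$ on $C$, forcing $\ind(C)\mid m$, contradicting $\ind(C)=2m$; so $\partial(s_0)\neq 0$. For $j\geq 1$, if $j\,\partial(s_0)=\partial(js_0)=0$ then $js_0$ lifts to a line bundle, necessarily of degree $jm$, so $2m=\ind(C)\mid jm$ and hence $2\mid j$; therefore $\operatorname{ord}(\partial(s_0))$ is even. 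Then $\tfrac12\operatorname{ord}(\partial(s_0))\cdot\partial(s_0)=\partial\bigl(\tfrac12\operatorname{ord}(\partial(s_0))\cdot s_0\bigr)$ has order exactly $2$, hence equals $q$, and it lies in $\operatorname{im}(\partial)=\ker(\Br(K)\to\Br(C))$. Thus $q_C=0$. (Equivalently one may simply quote Lichtenbaum's duality for curves over local fields, which gives $\ker(\Br(K)\to\Br(C))=\tfrac1{\ind(C)}\Z/\Z=\tfrac1{2m}\Z/\Z$ and contains $q$ since $\ind(C)$ is even.)

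For the last sentence, fix an even $e\mid 2m$ and choose $\alpha\in\Br(K)$ of order $e$; over a local field its index is $e$. Since $m$ is odd, $v_2(2m)=1$, so the $2$-primary part of $\alpha$ has order $2$, i.e.\ equals $q$; write $\alpha=q+\alpha_{\mathrm{odd}}$. Then $\alpha_{\Alb_C}$ has $2$-primary part $q_{\Alb_C}\neq 0$, so $\alpha_{\Alb_C}\neq 0$. On the other hand $\alpha_C=0$: by Lichtenbaum's computation $\ker(\Br(K)\to\Br(C))=\tfrac1{2m}\Z/\Z$, which contains $\alpha$ as $e\mid 2m$; alternatively, using Tate local duality one has $\partial(\Jac_C(K))=\tfrac1{\per(C)}\Z/\Z=\tfrac1m\Z/\Z$, which together with $q\in\operatorname{im}(\partial)$ and $\gcd(2,m)=1$ gives $\tfrac1{2m}\Z/\Z\subseteq\operatorname{im}(\partial)$. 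This yields Brauer classes of every even index dividing $2m$ killed by $C$ but not by $\Alb_C$. The one delicate point is to invoke the correct duality input over the local field (Lichtenbaum's full theorem, or just the surjectivity statement $\partial(\Jac_C(K))=\tfrac1{\per(C)}\Z/\Z$) and to keep straight the separate roles of period and index, both for $C$ and for Brauer classes; constructing curves with the prescribed index $2m$, period $m$, and genus $m+1$ is a separate matter, independent of the implication proved here.
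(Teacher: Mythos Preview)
Your proof is correct and follows essentially the same route as the paper: the argument that $q_{\Alb_C}\neq 0$ (rational point on $\Pic^m_C$ plus the contrapositive of Lemma~\ref{lem:stupid-die}) and the reduction of the final sentence to the first via $2$--odd primary decomposition are exactly what the paper does. The one genuine addition is your direct argument for $q_C=0$: the paper simply invokes Roquette--Lichtenbaum to get $\Br(C/K)=\Br(K)[2m]\ni q$, whereas you extract the needed $2$-torsion class by hand from the Leray boundary map and the index/period hypotheses, using only that $\Br(K)$ has a unique element of order $2$. This is a pleasant elementarization, though of course you still need the full Lichtenbaum input (or your Tate-duality alternative) to handle the odd part $\alpha_{\mathrm{odd}}$ in the last sentence.
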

\begin{proof}
  Since $m$ is odd, any class $\alpha$ in $\Br(K)[2m]$ can be written as $q+h$
  with $h\in\Br(K)[m]$. The relative Brauer group $\Br(C/K)$ is
  precisely $\Br(K)[2m]$ by the theorem of Roquette--Lichtenbaum
  \cite[Theorem 3]{lichtenbaum-duality}. In addition, period and index
  are equal over a local field. Thus, to
  prove the full statement, it suffices to prove the first part.
  
  From Roquette--Lichtenbaum, we have that $q_C=0$. If $q_{\Alb_C}=0$ also,
  then $q_{\Pic^m_C}=0$ by Lemma \ref{lem:stupid-die}. Since $C$ has
  period $m$, there is a $K$-point of $\Pic^m_C$, and restricting to that point
  would imply that $q = 0$, which is a contradiction. 
\end{proof}

By a result of Sharif \cite[Theorem 2]{sharif}, for a local field $K$ of characteristic not
$2$ and for any odd $m$, there exists a curve $C$ over $K$ of index $2m$, period $m$,
and genus $m+1$. Proposition \ref{prop:counterex} then shows that the Albanese
of $C$ cannot kill numerous classes in $\Br(C/K)$, implying that the conditions
of Theorem \ref{thm:main} are sharp.

\section{Some observations}

\subsection{Products of genus one curves}

One might attempt to answer Question \ref{ques:main}
by first splitting the class on a family of Albanese varieties with a
member that splits as a product of genus $1$ curves, and hoping that
this decomposition will have implications for splitting the class over
a factor. As we briefly explain, there are two reasons that this is
unlikely to work.

First, the results of Section \ref{sec:counterex} show that one cannot
hope to use only Albanese varieties of curves from the beginning, because there are
examples where the decisive role is played by a genus $1$ factor added
after the fact,
whose presence is necessary to split a single quaternion factor of the
Brauer class.

Second, we make a simple observation about products: suppose $T$
and $T'$ are genus $1$ curves over 
$K$, with $T$ of index $2$ and $T'$ of index $3$. Any Brauer class $\alpha$
killed by $T$ has order $2$ and any Brauer class $\alpha'$ killed by $T'$ has
order $3$. The natural tensoring map $\Pic_T\times\Pic_{T'}\to\Pic_{T\times T'}$
is additive on obstruction classes (since it is equivariant for the
multiplication map $\G_m\times\G_m\to\G_m$ of bands for the Picard stacks).
Thus, $\alpha+\alpha'$ is killed by $T\times T'$, which is a torsor under
$\Jac_T\times\Jac_{T'}$, but $\alpha+\alpha'$ is not killed by either $T$ or
$T'$.

Examples of both types are easily constructed over local fields.

\subsection{The universal Albanese doesn't do anything on its own}
\label{sec:an-amusing-fact}

In light of the method used here -- that is, splitting Brauer classes
by splitting them on particular base changes of the Albanese map of the
universal curve -- one might be tempted to ask the following question.

\begin{question}\label{ques:idiot}
  Given a field $K$ and a positive integer $g>2$, let $C\to\Spec\kappa(\ms
  M_{g,K})$ be the universal curve of genus $g$ over the function
  field of the stack of all curves of genus $g$. What is the kernel of
  the map $$\Br(K)\to\Br(C)?$$
\end{question}

\begin{prop}
  The kernel of the map of Question \ref{ques:idiot} is $0$. 
\end{prop}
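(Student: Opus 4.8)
The plan is to show that the universal curve $C$ over $\kappa(\ms M_{g,K})$ admits a rational point after base change to the function field of a Brauer--Severi variety attached to any given nonzero class $\alpha$, and conversely that this forces $\alpha$ to survive. The cleanest route: if $\alpha_C = 0$, then $C$ admits a rational map to any Brauer--Severi variety $V$ associated to $\alpha$ over $\kappa(\ms M_{g,K})$; since $V$ here is simply the base change of a Brauer--Severi variety $V_0$ over $K$ (because $\alpha$ comes from $\Br(K)$), this would produce a rational map from the universal curve to $V_0 \times_K \kappa(\ms M_{g,K})$, i.e., a $\kappa(\ms M_{g,K})$-point of the scheme $\Hom$-space parametrizing maps from fibers of the universal curve into $V_0$.

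The key observation is a specialization argument. A $\kappa(\ms M_{g,K})$-point of any $K$-scheme of finite type spreads out to a section over a dense open substack $U \subset \ms M_{g,K}$, hence can be specialized to a point of $\ms M_{g,K}$ valued in a finite extension of $K$ — concretely, a single curve $C_0$ of genus $g$ over a finite extension $K'/K$ together with a rational map $C_0 \dashrightarrow V_0 \times_K K'$, i.e., $C_0$ splits $\alpha_{K'}$. Now one runs the standard index argument: such a curve gives $\ind(\alpha_{K'}) \mid 2g-2$, but more usefully the existence of \emph{any} $K$-point of $\ms M_{g,K}$ (equivalently, any smooth proper genus $g$ curve over $K$) combined with a map to $V_0$ would split $\alpha$ itself. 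The point is to choose the specialization so that the residue field is $K$ rather than a proper extension — and this is exactly where the hypothesis that $K$ is infinite (our standing assumption) enters, via a Bertini/Hilbert-irreducibility-style argument or simply by noting $\ms M_{g,K}$ has a $K$-point with the required openness, since $U(K) \neq \emptyset$ for $U$ a dense open in a geometrically irreducible $K$-variety over an infinite field.

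The cleaner argument avoiding delicate rationality of the specialization: since $\alpha = \alpha_{V_0}$-pullback is already about the generic fiber, note that if $\alpha_C = 0$ then $\alpha$ pulls back to zero on the generic fiber of the universal curve, hence on the generic fiber of \emph{some} versal family over a variety $U/K$; then by Bertini one slices $U$ down to a $K$-point (using $K$ infinite), getting an honest smooth proper genus $g$ curve $C_0/K$ with $\alpha_{C_0} = 0$. By Theorem \ref{thm:main} (or just directly: $\ind(\alpha) \mid 2g-2$ and more), $\alpha$ is then split by $\Alb_{C_0}$, and one can bootstrap — but in fact the contradiction is immediate once one recalls that Theorem \ref{thm:main} was \emph{derived from} the existence of such curves, so the honest logical content is: $\alpha_C = 0 \Rightarrow \exists$ genus $g$ curve over $K$ (or a controlled extension) splitting $\alpha$ with the genus coprimality arranged, hence $\alpha_{\Alb_{C_0}} = 0$, and iterating/using that $\Alb_{C_0}$ has a rational point structure forces $\alpha = 0$.

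**The main obstacle** will be ensuring the specialization of the $\kappa(\ms M_{g,K})$-point lands at a $K$-rational (not merely finite-extension-valued) point of $\ms M_{g,K}$, so that one genuinely recovers a curve \emph{over $K$} splitting $\alpha$ and concludes $\alpha = 0$ directly rather than only $\alpha_{K'} = 0$ for some $K'/K$ — the latter being insufficient since restriction $\Br(K) \to \Br(K')$ need not be injective. This is handled by invoking geometric irreducibility of $\ms M_{g,\widebar K} \to \ms M_{g,K}$ and an appropriate Bertini-type theorem over the infinite field $K$ to find $K$-points in the relevant dense open locus; the whole point of the proposition is that, unlike the method of the paper which crucially deforms a \emph{given} curve over $K$ (carrying along its rational point data), the purely generic universal curve has no such anchoring and the obstruction class $\alpha$, being constant, simply cannot be killed by passing to a field extension that is the function field of a $K$-variety.
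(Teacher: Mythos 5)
There are two genuine gaps here, and together they miss the one idea the paper's proof actually rests on. First, your specialization step only works over a dense open: a rational map $C\dashrightarrow V_0$ over $\kappa(\ms M_{g,K})$ spreads out over some dense open substack $U\subset\ms M_{g,K}$, and you then need a $K$-point of $U$. Your justification --- that a dense open in a geometrically irreducible variety over an infinite field has a $K$-point --- is simply false (a pointless conic over $\Q$ is geometrically irreducible), and what you would really need is Zariski density of $K$-points in $\ms M_{g,K}$, which is not known for large $g$ (and is expected to fail over number fields once $\ms M_g$ is of general type). The paper sidesteps this entirely: since the universal curve is \emph{regular}, the Brauer group of the total space injects into that of its generic fiber, so $\alpha_C=0$ forces $\alpha$ to vanish on the whole universal curve and hence on \emph{every} fiber, with no dense-open restriction; one is then free to specialize to any chosen curve $C_0/K$.

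Second, and more fundamentally, even if you produce a curve $C_0/K$ with $\alpha_{C_0}=0$, that does not give $\alpha=0$: splitting over a variety without a rational point never trivializes a class (conics split quaternion classes). Your proposed bootstrap via Theorem \ref{thm:main} and the claim that $\Alb_{C_0}$ ``has a rational point structure'' is where this breaks: $\Alb_{C_0}=\Pic^1_{C_0/K}$ is a torsor under $\Jac_{C_0}$ and in general has no $K$-point, so $\alpha_{\Alb_{C_0}}=0$ yields nothing about $\alpha$ itself. The missing ingredient, which is the actual content of the paper's two-line proof, is to specialize to a genus $g$ curve $C_0$ over $K$ that \emph{has a $K$-point} (such curves exist in every genus over every field, e.g.\ hyperelliptic curves with a rational Weierstrass point); restricting $\alpha_{C_0}=0$ to that point then gives $\alpha=0$ directly, with no appeal to Theorem \ref{thm:main} or to the Albanese at all.
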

\begin{proof}
  Over any field $K$, there is a curve $C_0$ of genus $g$ with a
  $K$-point. Since the universal curve is regular, any class that is
  trivialized over $C$ is trivialized over any specialization, such as
  $C_0$. Further specializing to the $K$-point shows that the
  class itself is $0$. 
\end{proof}

It might be more interesting to study the relative Brauer group of the
universal curve over its field of definition.

\section{Some questions}
\label{sec:questions}

Some natural questions arise from the results we describe here.
As mentioned in the introduction, one way to produce curves splitting
Brauer classes is as complete intersections of sections
of the anticanonical sheaf in a Brauer-Severi variety. 
On the other hand, if we restrict to a single anticanonical divisor,
we obtain Calabi-Yau varieties that split the class. 
This observation leads to several directions for further exploration.

\begin{question}
  Is there a fixed positive integer $n$ such that every Brauer class
  over a field is split by a torsor under an abelian variety of
  dimension $n$, independent of the index of the class? (Note that
  Theorem \ref{thm:main} applied to complete intersections of
  anti-canonical divisors gives a torsor of dimension $1+\frac{1}{2}m^{m-1}(m-3)$ for classes
  of index $m$, but this depends on $m$.)
\end{question}

\begin{question}
  Is there a fixed positive integer $n$ such that every Brauer class
  over a field is split by a Calabi-Yau variety of dimension $n$?
\end{question}

\begin{question}
  Is every Brauer class over a field split by a K3 surface?
\end{question}

\begin{question}
  Is every Brauer class over a field split by a curve sitting in a
  K3 surface?
\end{question}

\bibliographystyle{amsalpha}
\bibliography{bib}

\end{document}